\documentclass[11pt]{article}

\usepackage{amssymb,amsfonts,amsmath}
\usepackage{enumerate}

\usepackage[english]{babel}
\usepackage[cp1250]{inputenc}

\usepackage{algorithm2e}

%%!\input{graphicx.inc}  %poskrbi za izbiro slike eps ali pdf (glede na prevajanje LaTeX ali PDF LaTeX)

\usepackage{longtable}
\usepackage{changepage}

\usepackage{graphicx}
\usepackage{index}
\usepackage{fancyhdr}
\usepackage{lhelp}
\usepackage{optparams}
\usepackage{psfrag}
%\usepackage{enumitem}
%\usepackage{hyperref}
%za dvojni razmik med vrsticami
\usepackage{forloop}
\usepackage{setspace}
\usepackage{tikz}
\usepackage{subcaption}
\usepackage{pgffor}
\usepackage{xifthen}

\definecolor{lgray}{gray}{0.75}

\usetikzlibrary{decorations.pathreplacing,automata,calc,positioning}

%Page setup
\textwidth 14.5cm
\textheight 21cm %% 
\oddsidemargin 0.4cm
\evensidemargin 0.4cm
\voffset -2cm

%TO \setlength{\textheight}{9.0in}%
%TO \newcommand{\pl}{{\sc plus}}
%TO \newcommand{\eq}{{\sc equal}}
%TO \newcommand{\mi}{{\sc minus}}
\newcommand{\pch}{\chi_{\rho}}
\newcommand{\diam}{{\rm diam}}

%\setlength{\marginparsep}{3 mm}%
%\setlength{\marginparwidth}{1.5 cm}%
%\raggedbottom
%

% Paragraf settings
%\setlength{\parindent}{0 mm}%
%\setlength{\parskip}{1.5ex plus 0.5ex minus 0.5ex}%
%\renewcommand{\baselinestretch}{1.25}%
%TO \setlength{\mathindent}{1 cm}%
%
%

%
%\newcommand{\nnsection}[1]{\section*{#1}\addcontentsline{toc}{section}{\protect\numberline{}#1}}
%\newcommand{\nnsubsection}[1]{\subsection*{#1}\addcontentsline{toc}{subsection}{\protect\numberline{}#1}}

%\newcommand{\symbolschapter}[1]{%
%  \chapter*{#1}
%  \markboth{#1}{#1}%
%  \addcontentsline{toc}{chapter}{#1}
%}

\newcommand{\mptt}[1]{}

\newtheorem{theorem}{Theorem} %ce pise na koncu Chapter, bo potem vse ?tevil?eno 1.15 in podobno%

\newtheorem{proposition}[theorem]{Proposition}

\newtheorem{remark}[theorem]{Remark}

 %za kvadratek na koncu proofa%

\newcommand{\qed}{\hfill $\square$ \bigskip}

\begin{document}

\title{On packing total coloring}

\author{
Jasmina Ferme$^{a}$, Da\v sa Mesari\v c \v Stesl$^{b}$\\
jasmina.ferme1@um.si; Dasa.Stesl@fri.uni-lj.si
 }

\date{}

\maketitle

\begin{center}
$^a$ Faculty of Education, University of Maribor, Maribor, Slovenia\\
\medskip

%$^b$ Faculty of Natural Sciences and Mathematics, University of Maribor, Maribor, Slovenia\\
%\medskip

$^b$ Faculty of Computer and Information Science, University of Ljubljana, Ljubljana, Slovenia\\
\medskip

%$^c$ Institute of Mathematics, Physics and Mechanics, Ljubljana, Ljubljana, Slovenia\\

\end{center}

\begin{abstract}
In this paper, we introduce a new concept in graph coloring, namely the \textit{packing total coloring}, which extends the idea of packing coloring to both the vertices and the edges of a given graph. More precisely, for a graph $G$, a packing total coloring is a mapping $c: V(G) \cup E(G) \rightarrow \{1, 2, \ldots\}$ with the property that for any integer $i$, any two distinct elements $A, B \in V(G) \cup E(G)$ with $c(A) = c(B) = i$ must be at distance at least $i+1$ from each other.  
Note that the distance between $A$ and $B$ means: a) the usual shortest-path distance between $A$ and $B$ if $A, B \in V(G)$; b) the $\min \{d(a,d), d(a,c),d(b,c), d(b,d)\}+1$ if $\{A, B\} =\{ab, cd\} \subseteq E(G)$; c) the $ \min \{d(a,X), d(b,X)\}+1$ if $\{A, B\}=\{ab, X\}$, where $ab \in E(G)$ and $X \in  V(G)$.
The smallest integer $k$ such that $G$ admits a packing total coloring using $k$ colors is called the \textit{packing total chromatic number}, denoted by $\chi_\rho^{''}(G)$.

In addition to introducing this new concept, we provide lower and upper bounds for the packing total chromatic numbers of graphs. Furthermore, we consider packing total chromatic numbers of graphs from the perspective of their maximum degrees and characterize all graphs $G$ with $\chi_\rho^{''}(G) \in \{1, 2, 3, 4, 5\}$.

\end{abstract}

\noindent {\bf Key words: coloring, packing, packing coloring, packing edge coloring, total coloring, packing total coloring.} 

\medskip\noindent
{\bf AMS Subj.\ Class: 05C15, 05C70, 05C69}

%%%%%%%%%%%%%%%%%%%%%
%%%%%%%%%%%%
\section{Introduction}
Over the years, classical graph coloring has inspired the development of many new variants of colorings. One such variant is a packing coloring, introduced by Goddard, S.M.~Hedetniemi, S.T.~Hedetniemi, Harris and Rall in 2008~\cite{goddard-2008} under the name broadcast coloring, and defined as follows. Given a graph $G$ and a positive integer $k$, a \textit{$k$-packing coloring} is a function $c: V(G)\rightarrow \{1,2, \ldots, k\}$ such that for any distinct vertices $u,v \in V(G)$ and any $i \in \{1,2, \ldots, k\}$, the following implication holds: if $c(u)=c(v)=i$, then $d(u, v) > i$, where $d(u,v)$ denotes the usual shortest-path distance between $u$ and $v$. The {\em packing chromatic number} of $G$, denoted by $\pch(G)$, is the smallest integer $k$ such that there exists a $k$-packing coloring of $G$. As mentioned above, the concept of a packing coloring was initially named as broadcast coloring, which refers to its possible applications in the field of the assignment of broadcast frequencies to radio stations. Later, Bre\v sar, Klav\v zar and Rall~\cite{bkr-2007} renamed the term broadcast coloring to packing coloring since this concept is closely related to the concept of packing. 

The concept of packing coloring has attracted considerable attention from researchers, as evidenced by numerous publications on the topic, including a survey~\cite{survey} and several more recent studies (e.g.,~\cite{dliou, peterin, junosza, kristiana}).

A packing coloring is a special case of an $S$-packing coloring, defined as follows. If $S=(a_1,a_2,\ldots)$ is a non-decreasing sequence of positive integers, then an $S$-packing coloring of $G$ is a partition of $V(G)$ into sets $X_1,X_2,\ldots$ with the property that for each $i \in \{1,2,\ldots\}$, any two distinct vertices in $X_i$ are at distance greater than $a_i$. In the case when $a_i=i$ for each $i$, the notations and terminology reduces to packing coloring. 

While an $S$-packing coloring refers to a coloring of the vertices of a given graph, there also exists a natural variation focused on edge coloring, known as the \textit{$S$-packing edge coloring}, defined as follows.
If $S=(a_1,a_2,\ldots)$ is a non-decreasing sequence of positive integers, then an $S$-packing edge-coloring of $G$ is a partition of the edge set of $G$ into subsets
${E_1, E_2, \ldots}$ such that for any $i$ the distance between every pair of distinct edges $e_1, e_2 \in E_i$ is at least $a_i+1$. Note that the distance between $e_1=ab$ and $e_2=cd$ is $d(e_1,e_2)=\min \{d(a,d), d(a,c),d(b,c), d(b,d)\}+1$ which coincides with the distance between the corresponding vertices of $e_1$ and $e_2$ in the line graph of $G$. 
The concept of $S$-packing edge coloring is relatively new but has already been investigated in several papers (see, e.g.,~\cite{gt-2019, hocquard, liu, liu-santana, yang}). 
Similar to the case of $S$-packing coloring, a special case of $S$-packing edge-coloring, based on the sequence $(1,2,3, \ldots)$, is called a \textit{packing edge-coloring}. The smallest integer $k$ with the property that $G$ admits packing edge-coloring using $k$ colors is called \textit{packing chromatic index} of $G$ and is denoted by $\chi_\rho^{'}(G)$.

In this paper, we merge both the two aforementioned variations of packing colorings by introducing the concept of \textit{packing total coloring}. This new concept combines both packing coloring (of the vertices) and packing edge-coloring. Specifically, it refers to a coloring of both the vertices and edges of a graph, which is why it is termed \textit{total} coloring. The term \textit{packing} is used because the coloring must satisfy the conditions of a packing coloring on the vertex set and the conditions of a packing edge-coloring on the edge set.

A packing total coloring is a mapping $c: V(G) \cup E(G) \rightarrow \{1,2, \ldots\}$ such that for any $i \in \{1,2, \ldots\}$ and any two distinct elements $A, B \in V(G) \cup E(G)$ the following implication holds: if $c(A)=c(B)=i$, then $d(A, B) > i$, where $d(A,B)$ means a) the usual shortest-path distance between $A$ and $B$ if $A, B \in V(G)$; b) the $\min \{d(a,d), d(a,c),d(b,c), d(b,d)\}+1$ if $A, B \in E(G)$ and $A=ab$, $B=cd$; c) the $ \min \{d(a,B), d(b,B)\}+1$ if $A=ab \in E(G)$ and $B \in V(G)$; d) the value of $ \min \{d(A,c), d(A,d)\}+1$ if $A \in V(G)$ and $B=cd \in E(G)$. Therefore, the distance between $A, B \in V(G) \cup E(G)$ coincides with the distance between the corresponding vertices in the \textit{total graph} of a graph $G$, denoted by $T(G)$. Recall that $V(T(G))=V(G) \cup E(G)$ and $E(T(G))=E(G) \cup \{AB;~A,B \in E(G), A,B~\textrm{are incident in} ~G\} \cup  \{aB;~a \in V(G),B \in E(G), a~\textrm{is an endpoint of}~B~\textrm{in}~G\}$. Therefore, the concept of packing total coloring of a given graph $G$ is equivalent to a packing coloring (of the vertices) of the total graph of a graph $G$. 

In this paper, we will also use the term \textit{$k$-packing total coloring}, which refers to a packing total coloring that uses exactly $k$ colors. Next, the smallest integer $k$ such that there exists a $k$-packing total coloring of $G$, is called the \textit{packing total chromatic number} of $G$ and is denoted by $\chi_\rho^{''}(G)$. Note that this notation coincides with the notations for packing coloring (by using $\pch$) and total coloring (with $''$).  

In the continuation of this paper, we provide several lower and upper bounds for the packing total chromatic number of graphs. We also investigate the packing chromatic number in relation to the maximum degree of a graph. As we will see, the packing total chromatic number for any graph $G$ with $\Delta(G) \in \{1,2\}$ is at most $11$. In contrast, the packing total chromatic numbers within the family of cubic (or subcubic) graphs are unbounded. Furthermore, we characterize all graphs $G$ with $\chi_\rho^{''}(G) \in \{1,2,3,4, 5\}$. Finally, we present several questions and open problems for future research.

%%%%%%%%%%%%%%%%%%%%%%%%%%%%%%%%%%%%%%%%%%%%%%%%%%%%%%%%%%%%%%%%%%%%%%%%%%%%%%%%%%%%%%%%%%%%%%%%%%%%%%%%%%%%%%%%%%%%%%%%%%%%%%%%%%%%%%%%%%%%%%%%%%%%%%%%%

\section{Notations and preliminaries}

Let $G$ be a simple, finite and undirected graph. We denote by $V(G)$ the vertex set of $G$ and by $E(G)$ the edge set of $G$. 

Let $u$ and $v$ be arbitrary two vertices from $V(G)$. We say that $u$ is a \textit{neighbor} of $v$ ($v$ is a \emph{neighbor} of $u$, respectively) if $uv \in E(G)$. The \emph{(open) neighborhood} ($N_G(u)$) of $u$ is the set of all neighbors of $u$. Additionally, the cardinality of the set $N_G(u)$ is called the \emph{degree} of $u$ and is denoted by $deg_G(u)$. If $deg_G(u) = 1$, then $u$ is called a \textit{leaf}. Note that the subscript in the above notations may be omitted if the graph $G$ is clear from context.
Further, the \emph{maximum degree} of $G$, denoted by $\Delta(G)$, is defined as $\Delta(G) =\max\{deg(v);~v \in V(G)\}$ and the \emph{minimum degree} of $G$, denoted by $\delta(G)$ as $\delta(G) =\min\{deg(v);~v \in V(G)\}$. Recall that a \emph{cubic graph} is a graph in which every vertex has degree exactly three and a \emph{subcubic graph} is a graph with maximum degree at most three. An \emph{independent set} in a graph $G$ is a set of pairwise non-adjacent vertices and the \emph{independence number} of a graph $G$, denoted by $\alpha(G)$, is the cardinality of a maximum independent set in $G$.

Further, we say that edges $e_1, e_2\in E(G)$ are \emph{incident} if they have a common endpoint. The subset of edges $M\subseteq E(G)$ is called a \emph{matching} in a graph $G$ if no two edges of $M$ are incident. The \emph{matching number} of $G$, denoted by $\nu(G)$, is the cardinality of a maximum matching in $G$. 

A graph $H$ is a subgraph of $G$ if $V(H) \subseteq V(G)$ and $E(H) \subseteq E(G)$.

Next, a \emph{bipartite graph} is a graph $G=(V,E)$ whose vertex set can be partitioned into two disjoint subsets of vertices $V_1$ and $V_2$ such that there are no edges between vertices within the same subset $V_i$ for any $i\in\{1,2\}$. A bipartite graph $G$ is a \emph{complete bipartite graph} if any vertex from $V_1$ is adjacent to any vertex from $V_2$. A complete bipartite graph whose partite sets satisfy $|V_1|=m$ and $|V_2|=n$ is denoted by $K_{m,n}$. In addition, a \emph{star} is a complete bipartite graph $K_{1,n}$ consisting of a central vertex connected to $n$ leaves.

The distance between any two vertices $u,v \in V(G)$ is defined as the length of a shortest path between $u$ and $v$ in $G$. Next, $d(u,ab)=min\{d(u,a), d(u,b)\}+1$ for any $u \in V(G)$ and any $ab \in E(G)$. Finally, the distance between two edges in $G$ is defined as follows: for any $ab, cd \in E(G): d(ab,cd)=\min\{d(a,c), d(a,d), d(b,c), d(b,d)\}+1$.

For a given graph $G$, a \emph{line graph} of $G$, denoted by $L(G)$, is defined as follows: $V(L(G))=E(G)$ and two vertices from $L(G)$ are adjacent if and only if the corresponding edges from $G$ have a vertex in common. Further, given a graph $G$, the \emph{total graph} of $G$, denoted by $T(G)$, is the graph with the vertex set $
V(T(G)) = V(G) \cup E(G)$, in which two vertices are adjacent whenever they correspond to adjacent vertices in $G$, to incident edges in $G$, or when one corresponds to a vertex $v\in V(G)$ and the other to an edge $e\in E(G)$ with endpoint $v$.
Note that a total graph of a given graph $G$ can be also constructed from $G$ and $L(G)$ as follow. Let $V(T(G))=V(G) \cup V(L(G))$ and $E(T(G))=E(G) \cup E(L(G)) \cup \{(x,xy);~ x \in V(G) \land y \in V(G) \land xy \in E(G)\}$. In other words, we form $T(G)$ such that we first take one copy of $G$ and one copy of $L(G)$. Next, we connect each vertex from a copy of $L(G)$ with those two vertices from a copy of $G$ which represent the endpoints of an edge corresponding to the considered vertex in $L(G)$. More precisely, each vertex $xy$ from copy of $L(G)$ is adjacent to the vertices $x$ and $y$ from a copy of $G$ in $T(G)$. An example of a graph $T(G)$ formed from a given graph $G$ is shown in Fig.~\ref{fig:Example_L'(G)}.

In this paper, we introduce the concept of a packing total coloring. It is defined as a mapping $c: V(G) \cup E(G) \rightarrow \{1,2, \ldots\}$ with the property that for any $i \in \{1,2, \ldots\}$ and any two distinct elements $A, B \in V(G) \cup E(G)$ the following implication holds: if $c(A)=c(B)=i$, then the distance between $A$ and $B$ is at least $i+1$. Next, the smallest integer $k$ such that there exists a packing total coloring of $G$ using $k$ colors, is the packing total chromatic number of $G$ and is denoted by $\chi_\rho^{''}(G)$. In this paper, we also use the notation $|c^{-1}(i)|$ to denote the number of vertices in $G$ to which is assigned color $i$ by $c$.

Regarding the definition of a total graph of a given graph $G$ and using the fact that for any $A,B \in V(G) \cup E(G)$, $d_G(A,B)$ coincides with the distance between the vertices $A$ and $B$ in $T(G)$, we conclude that a concept of a packing total coloring of a given graph $G$ coincides with the concept of packing coloring (of the vertices) of $T(G)$. This means that for any graph $G$ holds the following: $\chi_\rho^{''}(G)=\chi_\rho(T(G))$.

\vspace{0.3cm}

\begin{figure}[ht]
\begin{center}
\begin{tikzpicture}[scale=0.75, style=thick]
\def\vr{3pt}
\def\len{1}

% G
\coordinate(d) at (0.8,0);
\coordinate(e) at (1.6,-1);
\coordinate(a) at (0,2);
\coordinate(b) at (0.8,4);
\coordinate(c) at (0.8,3);
\draw (b)--(a)--(c);
\draw (a)--(d)--(e);
\draw(a)node[left]{$a$}; 
\draw(b)node[above]{$b$}; 
\draw(c)node[above]{$c$}; 
\draw(d)node[below]{$d$}; 
\draw(e)node[below]{$e$}; 
%%
% L(G)
\coordinate(A) at (4.2,3.8);
\coordinate(B) at (5,1.3);
\coordinate(C) at (6,2.8);
\coordinate(D) at (5,0);
\draw (B)--(A)--(C)--(B)--(D);
\draw(A)node[above]{$ab$}; 
\draw(B)node[right]{$ad$}; 
\draw(C)node[above]{$ac$}; 
\draw(D)node[below]{$de$};  
%
% vmesne povezave
\draw[thick, gray](c)--(C);
\draw[thick, gray](a)--(C);
\draw[thick, gray](a)--(B);
\draw[thick, gray](d)--(B);
\draw[thick, gray](a)--(A);
\draw[thick, gray](b)--(A);
\draw[thick, gray](d)--(D);
\draw[thick, gray](e)--(D);
%
% se vsi krogci
\draw(a)[fill=white] circle(\vr);
\draw(b)[fill=white] circle(\vr);
\draw(c)[fill=white] circle(\vr);
\draw(d)[fill=white] circle(\vr);
\draw(e)[fill=white] circle(\vr);
\draw(A)[fill=white] circle(\vr);
\draw(B)[fill=white] circle(\vr);
\draw(C)[fill=white] circle(\vr);
\draw(D)[fill=white] circle(\vr);
\draw (0.9,1.4) ellipse (1.6cm and 3.5cm);
\draw(-0.5,4)node[above]{$G$}; 
\draw (4.9,1.4) ellipse (1.6cm and 3.5cm);
\draw(6.5,4)node[above]{$L(G)$}; 
\end{tikzpicture}
\end{center}
\caption{An example of $T(G)$.} 
\label{fig:Example_L'(G)}
\end{figure}
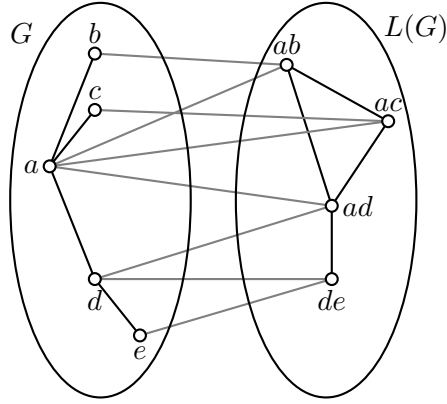

%From the definition of $L'(G)$ we derive the following lemma. 
%%% Lema --- stopnje 
%\begin{lemma}
%If $G$ is a graph with $V(G)=\{x_i; 1 \leq i \leq n\}$, then: 
%\begin{itemize}
%\item $\deg_{L'(G)}(x_i)=2\cdot \deg_G(x_i)$ for each $x_i \in V(G)$;
%\item  $\deg_{L'(G)}(x_ix_j)=\deg_{L(G)}(x_i)+2$ for each $x_ix_j \in V(L(G))$.
%\item Najvecja, najmanjsa stopnja?
%%\end{itemize}
%\end{lemma}

%\begin{proof}
%Tukaj bova utemeljili lemo. 
%\qed\end{proof}

%%%%%%%%%%%%%%%%%%%%%%%
%%%%%%%%%%%%%%%%%%%%%% TU SE ZAČNEJO PROPOSITIONS

In the remainder of this section, we provide some general bounds for the packing total chromatic number of a graph.
\vspace{0.3cm}

A packing coloring assigns colors to the vertices of a given graph in such a way that any two distinct vertices colored with color $i$ are at distance greater than $i$. Since this condition (among others) must also be satisfied by a packing total coloring, we have the following proposition.

\begin{proposition}
    Let $G$ be a graph. Then, $$\chi_{\rho}^{''}(G)\geq \chi_{\rho}(G).$$
\end{proposition}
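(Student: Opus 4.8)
The plan is to prove the inequality by restriction: any packing total coloring, when restricted to the vertex set, is already a packing coloring of $G$. First I would take an optimal packing total coloring $c: V(G) \cup E(G) \rightarrow \{1, \ldots, k\}$ of $G$, where $k = \chi_{\rho}^{''}(G)$, and consider the mapping $c'$ obtained by restricting $c$ to $V(G)$, i.e.\ $c' = c|_{V(G)}$.

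Next I would verify that $c'$ satisfies the defining condition of a packing coloring. Take any two distinct vertices $u, v \in V(G)$ with $c'(u) = c'(v) = i$. Since $c'$ is the restriction of $c$, we also have $c(u) = c(v) = i$ with $u, v \in V(G) \cup E(G)$, so the packing total coloring condition applies to the pair $u, v$. By case (a) of the distance definition, the distance used between two vertices is exactly the usual shortest-path distance $d(u,v)$ in $G$, and the condition therefore guarantees $d(u,v) > i$. This is precisely the requirement for $c'$ to be a valid packing coloring of $G$ using colors from $\{1, \ldots, k\}$, hence at most $k$ colors.

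Consequently $G$ admits a packing coloring with at most $k$ colors, and since the packing chromatic number is the smallest such integer, we conclude $\chi_{\rho}(G) \leq k = \chi_{\rho}^{''}(G)$, as claimed.

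I do not expect any genuine obstacle here: the argument hinges solely on the observation that the vertex-to-vertex distance used in a packing total coloring coincides with the ordinary distance in $G$, so the vertex part of a packing total coloring is automatically a packing coloring. An equivalent route would be to invoke the identity $\chi_{\rho}^{''}(G) = \chi_{\rho}(T(G))$ established above together with the fact that $G$ embeds isometrically into $T(G)$, but the direct restriction argument is cleaner and avoids having to verify that the embedding preserves distances.
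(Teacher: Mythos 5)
Your proposal is correct and is exactly the argument the paper intends: the paper justifies this proposition with the one-line observation that the vertex-distance condition of a packing coloring is among the conditions imposed by a packing total coloring, which is precisely your restriction argument spelled out in full. No gaps.
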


In addition, the difference between the packing total chromatic number and the packing chromatic number of a graph can be arbitrarily large. To prove that, consider a family of star graphs. We will show that for any positive integer $n$, $\chi_{\rho}^{''}(K_{1,n})=n+2$, while it is known that $\chi_{\rho}(K_{1,n})=2$. Consequently, for any $n$, $\chi_{\rho}^{''}(K_{1,n}) - \chi_{\rho}(K_{1,n})=n$. \\
First, we show that $\chi_{\rho}^{''}(K_{1,n}) \geq n+2$. Indeed, it is clear that any packing total coloring assigns to the edges of a star pairwise distinct colors, as all edges in this graph are pairwise incident. Additionally, the central vertex $x$ of the star needs to receive a color different from the colors of the edges, as it is the endpoint of each of these edges. Further, if all leaves of the star receive the same color, then this color is not assigned to $x$ neither to any  edge of a star. Hence, in this case, $\chi_{\rho}^{''}(K_{1,n}) \geq n+2$. Otherwise, at least one leaf receives a color which is not $1$. Therefore, this color is not assigned to $x$ neither to the edges of a star. Again, it follows that $\chi_{\rho}^{''}(K_{1,n}) \geq n+2$. \\
To show that $\chi_{\rho}^{''}(K_{1,n}) \leq n+2$, first color all leaves of $K_{1,n}$ with a color $1$. Then, color the vertex $x$ and all edges of the star with $n+1$ new, pairwise distinct colors. Since such coloring is a packing total coloring of $K_{1,n}$, it follows that $\chi_{\rho}^{''}(K_{1,n})=n+2$.\\

%%%%%% HEREDITARNOST

With the next proposition, we state that the packing total chromatic number has the hereditary property, which means that a graph cannot have a smaller packing total chromatic number than any of its subgraphs. Indeed, a packing total coloring of a given graph $G$, when restricted to the elements of $V(H) \cup E(H)$, where $H$ is an arbitrary subgraph of $G$, yields a packing total coloring of $H$, which clearly uses at most $\chi_{\rho}^{''}(G)$ colors.

\begin{proposition}
    For any graph $G$ and any subgraph $H$ of $G$, $\chi_{\rho}^{''}(H) \leq \chi_{\rho}^{''}(G)$.
    \label{hereditarnost}
\end{proposition}

%%%%%%%%%%%%%%%%%%%%
%%%%%%%%%%%%%%%%%%%%
Recall that $\chi_{\rho}^{''}(K_{1,n})=n+2$ holds for any $n$. Since each graph $G$ with at least one edge contains a subgraph isomorphic to $K_{1,\Delta(G)}$, Prop.~\ref{hereditarnost} implies the following lower bound for the packing total chromatic number of a graph, expressed in terms of its maximum degree.

\begin{proposition}
For any graph $G$ with at least one edge, $\chi_{\rho}^{''}(G) \geq \Delta(G)+2$. 
\label{lema_spodnja_meja}
\end{proposition}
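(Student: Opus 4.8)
The plan is to exhibit a large clique in the total graph $T(G)$ and then argue that one extra color beyond the clique size is forced. Let $v$ be a vertex with $\deg(v)=\Delta(G)=:\Delta$, let $e_1,\dots,e_\Delta$ be the edges incident with $v$, and let $u_i$ denote the endpoint of $e_i$ other than $v$. In $T(G)$ the set $K=\{v,e_1,\dots,e_\Delta\}$ is a clique of size $\Delta+1$: the edges are pairwise incident at $v$, and $v$ is incident with each of them. Since adjacent elements can never share a color in a packing total coloring (distance $1$ is never greater than any color $i\ge 1$), the clique $K$ already forces $\chi_{\rho}^{''}(G)\ge \Delta+1$. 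The whole point is to upgrade this to $\Delta+2$.

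To do so I would assume for contradiction that a packing total coloring $c$ using only the colors $\{1,\dots,\Delta+1\}$ exists. Then $c$ restricted to $K$ is a bijection onto $\{1,\dots,\Delta+1\}$; in particular there is a unique element $Z\in K$ with $c(Z)=1$. The key computation is the position of a neighbor $u_i$ relative to $K$ in $T(G)$: $u_i$ is adjacent to both $v$ and $e_i$, so it lies at distance $1$ from each, whereas for $j\ne i$ the distance formula gives $d(u_i,e_j)=\min\{d(u_i,v),d(u_i,u_j)\}+1=2$, using $d(u_i,v)=1$ and $d(u_i,u_j)\ge 1$.

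From this I extract the crucial restriction. For $u_i$ to legally receive the color of a clique element $X$, the packing condition forces $d(u_i,X)\ge c(X)+1$. The elements $v$ and $e_i$ are at distance $1$, so their colors are forbidden for $u_i$ outright; an edge $e_j$ with $j\ne i$ is at distance $2$, so its color $c(e_j)$ is admissible only when $c(e_j)+1\le 2$, that is, only when $c(e_j)=1$. Hence the single color available to $u_i$ among $\{1,\dots,\Delta+1\}$ is color $1$, and even that requires the color-$1$ element $Z$ to be one of the edges $e_j$ with $j\ne i$.

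It then remains to locate $Z$ and pick a doomed neighbor. If $Z=v$, then for every $i$ the color $1$ sits on $v$ at distance $1$ from $u_i$, so no $u_i$ can be colored at all. If $Z=e_k$ for some $k$, then the neighbor $u_k$ is adjacent to $e_k=Z$, so color $1$ is unavailable to $u_k$ as well, and by the previous paragraph $u_k$ then has no admissible color. Either way some neighbor of $v$ cannot be colored, contradicting the validity of $c$; therefore $\chi_{\rho}^{''}(G)\ge\Delta+2$. I expect the main obstacle to be the distance-$2$ identity $d(u_i,e_j)=2$ together with the bookkeeping of the case $Z=e_k$, since this is exactly the configuration where the star $K_{1,n}$ shows the bound to be tight: there the leaves play the role of the $u_i$ and all legitimately take color $1$, which is possible precisely because in the star the color-$1$ element of the clique can be avoided for every leaf simultaneously.
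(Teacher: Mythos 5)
Your proof is correct and follows essentially the same approach as the paper's: both work with the vertex of maximum degree, its incident edges, and its neighbors, using the facts that these edges together with the vertex form a clique in $T(G)$ and that a neighbor $u_i$ lies at distance $2$ from the edges $e_j$ ($j\neq i$), so that only color $1$ could possibly be shared. Your packaging as a contradiction (forcing $c$ to be a bijection on the clique and then locating the color-$1$ element) merely streamlines the paper's three-way case analysis on where color $1$ sits.
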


%\begin{proof}
%Let $G$ be an arbitrary graph with at least one edge. Further, let $u \in V(G)$ such that $\deg(u)=\Delta(G)=k$. Denote the neighbors of $u$ by $u_1, u_2, \ldots, u_k$ and by $c$ an optimal packing total coloring of $G$. 
%
%If $c(u)=1$, then $c$ assigns $2k$ pairwise distinct colors, different from $1$, to the edges with endpoint $u$ and to the vertices from $N(u)$. Therefore, $c$ uses at least $2\Delta(G)+1 \geq \Delta(G)+2$ colors.
%
%Now, consider the case where $c(u) \neq 1$. Since all edges of $\{uu_i;~1 \leq i \leq k\}$ are incident and have endpoint $u$, $c$ assigns to them $k$ pairwise distinct colors, different from $c(u)$.  If one of these edges, say $uu_l$, $1 \leq l \leq k$, receives a color $1$ by $c$, then $u_l$ receives a color, different from $c(u)$ and different from colors assigned to the edges from $\{uu_i;~1 \leq i \leq k\}$. Hence, $c$ uses at least $k+2$ colors. The analogous result can be derived also in the case when none of the edges with endpoint $u$ receives a color $1$ by $c$. Indeed, in such situation, $u_1$ receives a color which is not used for a packing total coloring of $u$ or edges from $\{uu_i;~1 \leq i \leq k\}$ and again, $c$ uses at least $k+2$ colors. This concludes the proof. 
%
%    \qed
%\end{proof}

Additionally, note that the above bound is sharp. Indeed, for a star graph, we have already showed that $\chi_{\rho}^{''}(K_{1,n}) = n + 2$, which equals $\Delta(K_{1,n}) + 2$. \\

Recall that Goddard, S.M.~Hedetniemi, S.T.~Hedetniemi, Harris and Rall~\cite{goddard-2008} proved the following upper bound for the packing chromatic number of a graph: $\chi_{\rho}(G) \leq |V(G)| - \alpha(G) + 1$. In addition, if $\mathrm{diam}(G) = 2$, then $\chi_{\rho}(G) = |V(G)| - \alpha(G) + 1$. These facts are mentioned here because they will be used multiple times in the remainder of this paper and also serve as the foundation for the following proposition.

\begin{proposition}
    For any connected graph $G$, $\chi_{\rho}^{''}(G)\leq |V(G)|+|E(G)|-\max\{\alpha(G), \nu(G)\}$\\$+1$. In addition, the bound is sharp.
    \label{zgornja_meja}
\end{proposition}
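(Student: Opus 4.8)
The plan is to reduce everything to the packing-coloring bound for the total graph. Since the excerpt establishes $\chi_\rho^{''}(G)=\chi_\rho(T(G))$, I would begin by applying the Goddard et al.\ upper bound recalled above, $\chi_\rho(H)\le |V(H)|-\alpha(H)+1$, to the graph $H=T(G)$. Because $V(T(G))=V(G)\cup E(G)$ as a disjoint union, we have $|V(T(G))|=|V(G)|+|E(G)|$, and hence
$$\chi_\rho^{''}(G)=\chi_\rho(T(G))\le |V(G)|+|E(G)|-\alpha(T(G))+1.$$
So it suffices to prove $\alpha(T(G))\ge \max\{\alpha(G),\nu(G)\}$; since a larger value of $\alpha(T(G))$ only sharpens the displayed estimate, the stated inequality follows at once.

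The key step is to exhibit two large independent sets in $T(G)$, one built from vertices and one from edges. First, recall that two vertices of $G$ are adjacent in $T(G)$ precisely when they are adjacent in $G$; therefore any independent set of $G$ is, verbatim, an independent set of $T(G)$, giving $\alpha(T(G))\ge \alpha(G)$. Second, two edge-vertices of $T(G)$ are adjacent precisely when the corresponding edges of $G$ are incident; hence the edge-vertices arising from a maximum matching $M$ of $G$ form a set of pairwise non-adjacent vertices in $T(G)$, giving $\alpha(T(G))\ge |M|=\nu(G)$. Taking the larger of the two bounds yields $\alpha(T(G))\ge \max\{\alpha(G),\nu(G)\}$, which completes the inequality.

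For sharpness I would reuse the star computation already carried out in the excerpt. For $G=K_{1,n}$ we have $|V(G)|=n+1$, $|E(G)|=n$, $\alpha(G)=n$ and $\nu(G)=1$, so $\max\{\alpha(G),\nu(G)\}=n$ and the right-hand side equals $(n+1)+n-n+1=n+2$. Since it was shown that $\chi_\rho^{''}(K_{1,n})=n+2$, equality holds for every $n$, and the bound is attained.

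The only genuinely substantive point is the middle paragraph, the verification that $\alpha(T(G))\ge\max\{\alpha(G),\nu(G)\}$, but this becomes transparent once one reads off the adjacency rules of $T(G)$ separately on the vertex-part and the edge-part. The one thing I would double-check is that each of the two constructed sets avoids \emph{all three} edge types of $T(G)$ (vertex--vertex, edge--edge, and vertex--edge). For the $\alpha(G)$-set this is automatic, since it contains no edge-vertices and is independent in $G$; for the matching set it is automatic, since it contains no original vertices and $M$ is a matching. That clean separation is exactly what makes the reduction work, so I expect no further obstacle.
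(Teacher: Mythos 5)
Your proof is correct, and it reaches the bound by a slightly different route than the paper: the paper constructs the packing total coloring directly on $V(G)\cup E(G)$ (in the case $\max\{\alpha(G),\nu(G)\}=\alpha(G)$ it colors a maximum independent set with $1$ and gives everything else distinct colors; in the case $\max\{\alpha(G),\nu(G)\}=\nu(G)$ it does the same with a maximum matching), whereas you pass to $T(G)$, invoke the Goddard et al.\ bound $\chi_\rho(H)\le |V(H)|-\alpha(H)+1$ as a black box, and isolate the combinatorial content in the inequality $\alpha(T(G))\ge\max\{\alpha(G),\nu(G)\}$. The two arguments are the same construction in different packaging: unwinding the Goddard bound on $T(G)$ with the independent set you exhibit recovers exactly the paper's coloring. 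What your formulation buys is a cleaner modular statement and, as you note in passing, the sharper intermediate bound $\chi_\rho^{''}(G)\le |V(G)|+|E(G)|-\alpha(T(G))+1$, which can beat the stated one when $T(G)$ has an independent set mixing vertices and edges (e.g.\ an independent set of $G$ together with edges of a matching far from it). Your verification that each candidate set avoids all three adjacency types of $T(G)$, and your sharpness computation on $K_{1,n}$, match the paper's. No gaps.
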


\begin{proof}
    Let $G$ be a connected graph. In order to prove the desired upper bound, we construct a packing total coloring $c$ of $G$ using $|V(G)|+|E(G)|-\max\{\alpha(G), \nu(G)\}+1$ colors. \\
    First, consider the case when $\max\{\alpha(G), \nu(G)\}=\alpha(G)$. Let $I$ be a maximum independent set of $G$, which means that $|I|=\alpha(G)$. Now, set $c(v)=1$ for any vertex $v\in I$ and let the vertices from $V(G)\setminus I$ and the edges from $E(G)$ receive pairwise distinct colors from the set of colors $\{2,3,\ldots ,|V(G)|+|E(G)|-\alpha(G)+1\}$. Obviously, the obtained coloring is a packing total coloring of $G$ using $|V(G)|+|E(G)|-\max\{\alpha(G), \nu(G)\}+1$ colors. \\
    It remains to consider the case when $\max\{\alpha(G), \nu(G)\}=\nu(G)$. Let $M$ be a matching of $G$ such that $|M|=\nu(G)$. Now, set $c(e)=1$ for any edge $e\in M$ and let the vertices from $V(G)$ and the edges of $E(G)\setminus M$ receive pairwise distinct colors from the set of colors $\{2,3,\ldots ,|V(G)|+|E(G)|-\nu(G)+1\}$. Again, the coloring $c$ is a packing total coloring of $G$ using $|V(G)|+|E(G)|-\max\{\alpha(G), \nu(G)\}+1$ colors. \\
    We conclude that $\chi_{\rho}^{''}(G)\leq |V(G)|+|E(G)|-\max\{\alpha(G), \nu(G)\}+1$. 

    To prove the tightness of the bound, consider an arbitrary star $K_{1,n}$.
    Since $\alpha(K_{1,n})=n$ and $\nu(K_{1,n})=1$, we have $|V(K_{1,n})|+|E(K_{1,n})|-\max\{\alpha(K_{1,n}), \nu(K_{1,n})\}+1=(n+1)+n-n+1=n+2= \chi_{\rho}^{''}(K_{1,n})$.
    This completes the proof.
    \qed
\end{proof}

%V nadaljevanju le povezani grafi - pojasni zakaj

\section{Packing total chromatic numbers of graphs in relation to their maximum degrees}

In this section, we consider the packing total chromatic numbers of connected graphs with respect to their maximum degrees. The only connected graph $G$ with $\Delta(G) = 1$ is the complete graph $K_2$, for which $\chi_{\rho}^{''}(K_2) = 3$. Any connected graph with $\Delta(G) = 2$ is isomorphic to either a cycle $C_n$ or a path $P_n$, where $n \geq 3$. As we show later in this section, the packing total chromatic number for graphs in the family $\{G;~\Delta(G) = 2\}$ is bounded above by $11$. In contrast, the packing total chromatic number in the family of (sub)cubic graphs is unbounded. This follows from the known fact that the packing chromatic number in this family is unbounded~\cite{balogh-2018}. Furthermore, an explicit infinite family of subcubic graphs with unbounded packing chromatic number constructed in~\cite{bf-2018b} also serves as an example of an infinite family of subcubic graphs with unbounded packing total chromatic number. Additionally, since any graph $G$ with $\Delta(G) > 3$ contains a subgraph isomorphic to a subcubic graph, it follows that the packing total chromatic number in this broader family is also unbounded.

In continuation, we consider packing total colorings of paths and cycles. Within the proofs, the distance graph $D(1,2)$ appears several times. Recall that this is the infinite graph with $\mathbb{Z}$ as its vertex set, where two distinct vertices $i, j \in \mathbb{Z}$ are adjacent if and only if $|i - j| \in \{1, 2\}$. 

Let $P_n$, $n \geq 3$, be a path and denote by $u_1, u_2, \ldots, u_n$ its consecutive vertices. Then $V(T(P_n))=\{u_1, u_2, \ldots, u_n, u_1u_2,u_2u_3, \ldots, u_{n-1}u_n\}$ and $E(T(P_n))=\{\{u_i,u_{i+1}\};~1 \leq i \leq n-1\} \cup \{\{u_i,u_iu_{i+1}\};~1 \leq i \leq n-1\} \cup \{\{u_i,u_{i-1}u_{i}\};~2 \leq i \leq n\}  \cup \{\{u_iu_{i+1}, u_{i+1}u_{i+2}\};$ $1 \leq i \leq n-2\}$. An example of a $T(P_n)$, namely $T(P_5)$ with vertices denoted as described, is shown in Fig.~\ref{fig:figure_T(P_5)}. Note that for each $n$, the total graph $T(P_n)$ is isomorphic to a subgraph of the distance graph $D(1,2)$.

%%% SLIKA ZA TOTAL GRAPH ZA P5
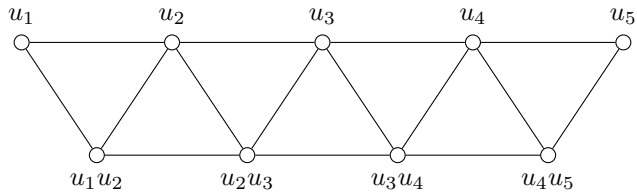
\begin{figure}[h]
\begin{center}
\begin{tikzpicture}[scale=1, 
  vertex/.style={circle, draw=black, fill=white, inner sep=2pt},
  every label/.style={font=\small}
  ]

  % Original P5 vertices
  \node[vertex, label=above:$u_1$] (u1) at (0,0) {};
  \node[vertex, label=above:$u_2$] (u2) at (2,0) {};
  \node[vertex, label=above:$u_3$] (u3) at (4,0) {};
  \node[vertex, label=above:$u_4$] (u4) at (6,0) {};
  \node[vertex, label=above:$u_5$] (u5) at (8,0) {};

  % Edge nodes
  \node[vertex, label=below:$u_1u_2$] (e1) at (1,-1.5) {};
  \node[vertex, label=below:$u_2u_3$] (e2) at (3,-1.5) {};
  \node[vertex, label=below:$u_3u_4$] (e3) at (5,-1.5) {};
  \node[vertex, label=below:$u_4u_5$] (e4) at (7,-1.5) {};

  % Edges between original vertices (path P5)
  \draw (u1) -- (u2);
  \draw (u2) -- (u3);
  \draw (u3) -- (u4);
  \draw (u4) -- (u5);

  % Edges from vertices to their incident edge-nodes (total graph)
  \draw (u1) -- (e1);
  \draw (u2) -- (e1);
  \draw (u2) -- (e2);
  \draw (u3) -- (e2);
  \draw (u3) -- (e3);
  \draw (u4) -- (e3);
  \draw (u4) -- (e4);
  \draw (u5) -- (e4);

% Additional edges 
  \draw (e1)--(e2)--(e3)--(e4);
\end{tikzpicture}
\end{center}
\caption{A total graph of $P_5$ and notations of its vertices.}

\label{fig:figure_T(P_5)}
\end{figure}

%%%

With the following theorem we provide the exact values and bounds for the packing total chromatic numbers of paths $P_n$, $n \geq 3$.

\begin{theorem}
    $$\chi_{\rho}^{''}(P_n) =
\left\{\begin{array}{ll}
4; & n=3\,,\\
5; & n \in \{4,5\}\,,\\
6; & n \in \{6,7\}\,,\\
7; & n \in \{8,9, 10, 11, 12, 13\}\,.
\end{array}\right.$$
Moreover, for any $n \geq 14$, $\chi_{\rho}^{''}(P_n) \in \{7,8\}$.
\label{izrek:poti}
\end{theorem}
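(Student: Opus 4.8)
The plan is to exploit the isomorphism, recorded just before the statement, between $T(P_n)$ and the subgraph of the distance graph $D(1,2)$ induced on the $2n-1$ consecutive integers $\{1,2,\dots,2n-1\}$, realized by sending $u_i$ to $2i-1$ and the edge $u_iu_{i+1}$ to $2i$. Under this embedding the relevant distance becomes purely arithmetic, $d(p,q)=\lceil |p-q|/2\rceil$, so two elements receiving color $c$ must have index-difference at least $2c+1$. Thus $\chi_{\rho}^{''}(P_n)=\chi_{\rho}(T(P_n))$ is the minimum number of colors in a coloring of $\{1,\dots,m\}$, $m=2n-1$, in which color class $c$ has pairwise gaps at least $2c+1$. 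The first thing I would record is the resulting capacity bound: within any $2c+1$ consecutive integers color $c$ appears at most once, so in $\{1,\dots,m\}$ color $c$ is used at most $\lfloor (m-1)/(2c+1)\rfloor+1$ times.

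For the upper bounds I would give explicit colorings at the top of each range and then propagate downward by the hereditary property (the subgraph proposition). Concretely, I would exhibit valid colorings of the segments of lengths $5,9,13,17$, yielding $\chi_{\rho}^{''}(P_3)\le4$, $\chi_{\rho}^{''}(P_5)\le5$, $\chi_{\rho}^{''}(P_7)\le6$ and $\chi_{\rho}^{''}(P_9)\le7$; a workable template places color $1$ on the arithmetic set $\{1,4,7,\dots\}$ and then fits the sparser classes into the leftover positions (for $m=13$ one may take color $1$ on $\{1,4,7,10,13\}$, color $2$ on $\{6,11\}$, color $3$ on $\{2,9\}$, color $4$ on $\{3,12\}$, and colors $5,6$ on the singletons $\{5\}$ and $\{8\}$). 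For all $n\ge10$ I would instead produce one periodic $8$-coloring of the two-way-infinite $D(1,2)$ and restrict it to each segment, giving $\chi_{\rho}^{''}(P_n)\le8$ uniformly; the density budget of eight colors, $\sum_{c=1}^{8}1/(2c+1)>1$, leaves room for such a pattern, and one only has to display a single period and check the gap conditions.

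For the lower bounds the key simplification is again the hereditary property: it suffices to prove the three threshold inequalities $\chi_{\rho}^{''}(P_4)\ge5$, $\chi_{\rho}^{''}(P_6)\ge6$ and $\chi_{\rho}^{''}(P_8)\ge7$ (together with $\chi_{\rho}^{''}(P_3)\ge4$, immediate from Proposition~\ref{lema_spodnja_meja}); every larger value then follows by monotonicity, and in particular $\chi_{\rho}^{''}(P_n)\ge\chi_{\rho}^{''}(P_9)\ge7$ for all $n\ge10$. Each threshold is a finite statement about a segment of length $7$, $11$ or $15$. I would prove it by assuming one fewer color suffices and feeding the capacity bounds into a short case analysis: the capacities force the cheap classes to be (near-)maximal, which pins color $1$ to an essentially arithmetic placement, and then the remaining positions cluster so tightly that some vertex admits no color.

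The hard part will be exactly this last step. Pure counting is not decisive: for $m=7,11,15$ the sum of per-color capacities meets or exceeds $m$ (e.g.\ $5+3+3+2+2+2=17>15$ for $m=15$), so the lower bound cannot come from a capacity inequality and must instead rule out the \emph{simultaneous} realizability of the extremal classes. The delicate point is to show that maximizing the dense colors $1$ and $2$ forces a rigid, arithmetic-like configuration leaving an uncolorable cluster, while keeping the finitely many cases under control. The second genuinely hard issue is the gap between $7$ and $8$ for large $n$: pinning down the exact value would require either a periodic $7$-coloring of $D(1,2)$ or a proof that none exists, and the theorem deliberately leaves this undetermined.
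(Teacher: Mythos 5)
Your approach is essentially the paper's: explicit colorings plus the hereditary property for the upper bounds, restriction of the known $8$-packing coloring of $D(1,2)$ for $n\ge 10$, and capacity counting followed by a finite case analysis at the thresholds $P_4$, $P_6$, $P_8$ for the lower bounds; your arithmetic reformulation of $T(P_n)$ as the interval $\{1,\dots,2n-1\}$ in $D(1,2)$ with gap condition $|p-q|\ge 2c+1$ is correct, and the one explicit coloring you exhibit (for $m=13$) checks out. Be aware that the step you defer --- showing that the extremal color classes cannot be realized simultaneously for $P_6$ and especially $P_8$ --- is exactly where the paper spends most of its effort (on the order of fifteen subcases for $P_8$), so while the plan is sound and does go through, that case analysis still has to be carried out in full.
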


\begin{proof}
Let $P_n$, $n \geq 3$, be a path with the vertices denoted as described above.  

\vspace{0.3cm}
In the case when $n=3$, Prop.~\ref{lema_spodnja_meja} and Prop.~\ref{zgornja_meja} imply that $\chi_{\rho}^{''}(P_3) = 4$. %To prove that $\chi_{\rho}^{''}(P_3)= 4$ we construct a $4$-packing total coloring $c$ of $P_3$ as follows: let $u_1$ and $u_3$ receive a color $1$ by $c$ and let $c$ assign to the others vertices of $T(P_3)$ distinct colors from $\{2,3,4\}$. Hence, $\chi_{\rho}^{''}(P_3) = 4$

\vspace{0.3cm}
Now, let $n=4$. Denote by $c$ any optimal packing coloring of $T(P_4)$. Suppose that $c$ uses at most $4$ colors. We observe that the vertices $u_2, u_3$ and $u_2u_3$ cannot receive a color $1$ by $c$. Indeed, if any of the mentioned vertices receive $1$, then its neighbors receive four distinct colors from $\{2,3,4\}$, which is not possible. Moreover, at least one of the vertices from $\{u_1, u_1u_2\}$ receive a color greater than $1$ by $c$. Since four of the vertices from $\{u_1, u_2, u_3, u_1u_2, u_2u_3\}$ receive colors from $\{2,3,4\}$ and any two of these vertices are pairwise at distance at most $2$, we derive that $c$ cannot be a packing coloring of $T(P_4)$. Hence, $\chi_{\rho}^{''}(P_4) \geq 5$. In order to prove that $\chi_{\rho}^{''}(P_4) = 5$, we form a $5$-packing coloring of $T(P_4)$ in the following way: let $c$ assigns a color $1$ to the vertices $u_1$, $u_4$ and $u_2u_3$ and four pairwise distinct colors to the remaining vertices. It follows that ${\chi_\rho}^{''}(P_4) = 5$.

\vspace{0.3cm}
Next, since $T(P_4)$ is a subgraph of $T(P_5)$, the hereditary property of packing coloring implies that $\chi_{\rho}^{''}(P_5) \geq 5$. Now, we form a $5$-packing total coloring $c$ of $P_5$ to show that $\chi_{\rho}^{''}(P_5) = 5$. Let $c(u_1)=c(u_4)=c(u_2u_3)=1$, $c(u_2)=c(u_4u_5)=2$, $c(u_5)=c(u_1u_2)=3$, $c(u_3)=4$ and $c(u_3u_4)=5$. Indeed, $c$ is a $5$-packing total coloring of $P_5$, hence our claim holds. 

\vspace{0.3cm}
For $P_6$ we want to prove that $\chi_{\rho}^{''}(P_6) = 6$. Suppose to the contrary that there exists an optimal $5$-packing coloring $c$ for $T(P_6)$. We observe that $|c^{-1}(1)| \leq 4$, $|c^{-1}(2)| \leq 3$, $|c^{-1}(3)| \leq 2$, $|c^{-1}(4)| \leq 2$, $|c^{-1}(5)| \leq 1$. Using these colors, $c$ can assign a color to at most $12$ vertices. In addition, note that $|V(T(P_6))|=11$. In continuation, we distinguish two cases regarding the number of the vertices of $T(P_6)$, which are colored with $2$ by $c$. \\
\textbf{Case A.} $|c^{-1}(2)| =3.$\\ 
To satisfy that any two vertices of $T(P_6)$, both colored with $2$, are at distance at least $3$, $c$ has to assign a color $2$ to the vertices $u_1, u_6$ and $u_3u_4$. Consequently, $|c^{-1}(4)| \leq 1$, which implies that all of the above written bounds should be achieved. Since $|c^{-1}(1)|=4$ and $|c^{-1}(3)|=2$, we have $\{c(u_2), c(u_5), c(u_1u_2), c(u_5u_6)\} \subseteq \{1,3\}$. If $c(u_1u_2)=c(u_5u_6)=3$, then $c(u_2)=c(u_5)=1$ and there are no additional vertices, which receive the color $1$ by $c$. Hence, requirement that $|c^{-1}(1)|=4$ is not achieved. Therefore, without loss of generality we can assume that $c(u_5)=c(u_1u_2)=3$. Then, $c$ assigns a color $1$ to the vertices $u_2$, $u_5u_6$ and $u_4$. Consequently, $|c^{-1}(1)|=3$, which implies that we can color only $10$ vertices by $c$, a contradiction since $|V(T(P_6))|=11$. \\
\textbf{Case B.} $|c^{-1}(2)| \leq 2.$ \\
Note that in this case, all of the above written bounds should be achieved, including $|c^{-1}(2)| =2$. Since $|c^{-1}(4)|=2$, one of the vertices from $\{u_1, u_1u_2\}$ and one of the vertices from $\{u_6, u_5u_6\}$ receive the color $4$ by $c$. Additionally, since $|c^{-1}(1)|=4$, at least one of the vertices from $\{u_1, u_1u_2, u_6, u_5u_6\}$ receives the color $1$. Without loss of generality we can assume that a vertex colored with $1$ belongs to $\{u_6, u_5u_6\}$. Next, since $|c^{-1}(3)|=2$, we derive that the vertices from $\{u_1, u_1u_2\}$ receive colors $3$ and $4$ by $c$. But then, $|c^{-1}(1)| \leq 3$, a contradiction. 

To prove that $\chi_{\rho}^{''}(P_6) = 6$, we construct a $6$-packing coloring $c$ of $T(P_6)$ as follows. Let $c(u_1)=c(u_3)=c(u_6)=c(u_4u_5)=1$, $c(u_2)=c(u_5)=2$, $c(u_1u_2)=c(u_5u_6)=3$, and let the remaining three vertices receive pairwise distinct colors from $\{4,5,6\}$. 

\vspace{0.3cm}
Now, let $n=7$. Since $T(P_7)$ contains a subgraph isomorphic to $T(P_6)$, we have $\chi_{\rho}^{''}(P_7) \geq 6$. Next, let $c$ assigns the color $1$ to the vertices $u_1, u_4, u_7, u_2u_3, u_5u_6$, the color $2$ to $u_2, u_5$, the color $3$ to $u_3, u_6u_7$, the color $4$ to $u_6, u_1u_2$, the color $5$ to $u_3u_4$, and $6$ to $u_4u_5$. Clearly, $c$ is a $6$-packing total coloring of $P_7$, hence $\chi_{\rho}^{''}(P_7) = 6$. 

\vspace{0.3cm}
For $P_8$ we will prove that $\chi_{\rho}^{''}(P_8) = 7$. First, suppose to the contrary that there exists a $6$-packing coloring $c$ of $T(P_8)$. We observe that $|c^{-1}(1)| \leq 5$, $|c^{-1}(2)| \leq 3$, $|c^{-1}(3)| \leq 3$, $|c^{-1}(4)|\leq2$, $|c^{-1}(5)| \leq 2$ and $|c^{-1}(6)| \leq 2$. In addition, note that $|V(T(P_8))|=15$.
In continuation of the proof we distinguish two cases regarding the number of the vertices of $T(P_8)$, which are colored with $3$ by $c$.  \\
\textbf{Case 1.} $|c^{-1}(3)|=3$. \\
In this case, $c(u_1)=c(u_8)=c(u_4u_5)=3$ and consequently $|c^{-1}(6)| = 1$. 
\begin{adjustwidth}{0.4cm}{0cm}
\textbf{Case 1.1} $|c^{-1}(1)| = 5$ and $|c^{-1}(5)| =2$.  \\
The conditions imply that $\{c(u_2), c(u_1u_2)\}=\{c(u_7), c(u_7u_8)\}=\{1,5\}$. Then, it follows that $|c^{-1}(2)| \leq 2$ and $|c^{-1}(4)| = 1$, which implies $\sum_{i=1}^6|c^{-1}(i)| \leq 14$, a contradiction to $c$ being a $6$-packing total coloring of $P_8$. \\
\textbf{Case 1.2} $|c^{-1}(1)| = 5$ and $|c^{-1}(5)| =1$. \\
In this case $\sum_{i=1}^6|c^{-1}(i)| = 15$, which means that all of the above written bounds must be achieved. If $c(u_3u_4)=2$ or $c(u_5u_6)=2$, then $|c^{-1}(2)| \leq 2$, a contradiction. Consequently, one of the vertices from $\{u_4, u_5\}$ receive $2$ by $c$.  Moreover, we derive that $\{c(u_4), c(u_5)\}=\{1,2\}$. Due to the symmetry of a graph and currently obtained packing coloring, we can assume that $c(u_4)=2$ and $c(u_5)=1$. But then it is easy to derive that $|c^{-1}(1)| \leq 4$, a contradiction to our assumption. \\
\textbf{Case 1.3} $|c^{-1}(1)| \leq 4$ and $|c^{-1}(5)| =2$. \\
We observe that at least one of the vertices of $\{u_2, u_1u_2\}$ and at least one of the vertices from $\{u_7, u_7u_8\}$ receive the color $5$ by $c$. Next, since $\sum_{i=1}^6|c^{-1}(i)| = 15$, $|c^{-1}(2)| =3$, which means that to at least one of the vertices from $\{u_2, u_1u_2, u_7, u_7u_8\}$ $c$ assigns the color $2$. Without loss of generality we may assume that $\{c(u_7u_8), c(u_7)\}=\{2,5\}$.
The fact that $|c^{-1}(1)| = 4$ implies $\{c(u_1u_2),c(u_2)\}=\{1,5\}$. But then there do not exist two vertices of $T(P_8)$ which can be colored with the color $4$, a contradiction. \\
\textbf{Case 1.4} $|c^{-1}(1)| \leq 4$ and $|c^{-1}(5)| =1$. \\
In this case $\sum_{i=1}^6|c^{-1}(i)| = 14$, which means that $c$ cannot assign colors to all vertices of $T(P_8)$. 
\vspace{0.2cm}
\end{adjustwidth} %
\textbf{Case 2.} $|c^{-1}(3)| \leq 2$. 
\begin{adjustwidth}{0.4cm}{0cm}
First, suppose that $|c^{-1}(1)| \leq 4$. This implies that $\sum_{i=1}^6|c^{-1}(i)| \leq 15$, which means that all of the above written bounds should be achieved, including $|c^{-1}(1)| = 4$. Therefore, $|c^{-1}(5)| = 2$ and $|c^{-1}(6)| = 2$, which implies that one of the vertices from $\{u_1, u_1u_2\}$ and one of the vertices from$\{u_8, u_7u_8\}$ receives the color $6$ by $c$. Additionally, we may assume that $c$ assigns $5$ to one of the vertices from $\{u_1, u_1u_2\}$ and to one vertex from $\{u_7, u_8, u_6u_7, u_7u_8\}$. If none of the vertices from $\{u_7, u_8, u_6u_7, u_7u_8\}$ receives $2$ by $c$, then $|c^{-1}(2)| = 2$, a contradiction. Therefore, at least one of the vertices from the mentioned set receives $2$ by $c$ and additionally, one of them receives $4$ by $c$. Consequently, $|c^{-1}(1)| = 3$, a contradiction to our assumption. Therefore, $|c^{-1}(1)| = 5$. In continuation of the proof, we distinguish four subcases.  

\textbf{Case 2.1} $|c^{-1}(5)| = 2$ and $|c^{-1}(6)| = 2$. \\
Our conditions yield that one of the vertices from $\{u_1, u_1u_2\}$ and one of the vertices from $\{u_8, u_7u_8\}$ receives the color $6$ by $c$. Next, at least one of the mentioned vertices, a vertex from $\{u_1, u_8, u_1u_2, u_7u_8\}$, receives the color $5$ by $c$. Without loss of generality we may assume that $c$ assigns $5$ to $u_1$ or $u_1u_2$. Therefore, $\{c(u_1),c(u_1u_2)\}=\{5,6\}$. Next, since $|c^{-1}(1)| = 5$, we derive that $c(u_8)=1$. Consequently, $c(u_6u_7)=c(u_5)=c(u_3u_4)=c(u_2)=1$ and $c(u_7)=5$.  But then $|c^{-1}(4)| =1$ and $|c^{-1}(2)| \leq 2$. Since $\sum_{i=1}^6|c^{-1}(i)| = 14$, we have a contradiction. 

\textbf{Case 2.2} $|c^{-1}(5)| = 2$ and $|c^{-1}(6)| = 1$. \\
Since $|V(T(P_8))|=15$ we infer that in this case $|c^{-1}(2)| = 3$, $|c^{-1}(3)|= |c^{-1}(4)|= 2$ and $|c^{-1}(1)| = 5$. Note that exactly one vertex from each of the vertex sets $\{u_1,u_2,u_1u_2\}$, $\{u_3,u_2u_3,u_3u_4\}$, $\{u_4,u_5,u_4u_5\}$, $\{u_6,u_5u_6,u_6u_7\}$ and $\{u_7,u_8,u_7u_8\}$ receives color $1$. Now, we distinguish two subcases.\\
\textbf{Subcase 2.2.1} $c(u_2)=1$. \\
Consequently, $c(u_3u_4)=c(u_5)=c(u_6u_7)=c(u_8)=1$. Further, since $|c^{-1}(2)|=3$ and $|c^{-1}(5)|=2$ we have $\{c(u_1),c(u_1u_2)\}=\{c(u_7),(u_7u_8)\}=\{2,5\}$. But then $|c^{-1}(4)|= 1$, a contradiction.\\
\textbf{Subcase 2.2.2} $c(u_1)=1$ or $c(u_1u_2)=1$. \\
Due to the symmetry of a graph $T(P_8)$ we can assume that $c(u_1)=1$. Additionally, we can also say that $c$ assigns the color $1$ to one of the vertices $u_8$ or $u_7u_8$. Namely, since $|c^{-1}(1)| = 5$ exactly one of the vertices $u_7$, $u_8$ and $u_7u_8$ receives color $1$ by $c$ and the case when $c(u_7)=1$ is analogues to the subcase 2.2.1 due to the symmetry of $T(P_8)$.\\
Further, since $|c^{-1}(5)| = 2$, $c$ assigns exactly one of the vertices $u_2$, $u_1u_2$, $u_2u_3$ the color $5$.\\
If $c(u_2u_3)=5$, then $c(u_8)=5$ and $c(u_7u_8)=1$. %On the contrary, it is not possible to color five vertices of $T(P_8)$ with color $1$ by $c$. 
Moreover, $c$ assigns $1$ to five vertices, hence,  $c(u_6)=c(u_4u_5)=c(u_3)=1$. To satisfy the equalities $|c^{-1}(2)|=3$ and $|c^{-1}(4)|= 2$ we infer that $\{c(u_2),c(u_1u_2)\}=\{2,4\}$ and $\{c(u_7),c(u_6u_7)\}=\{2,4\}$. But then, $c$ can assign a color $3$ only to one vertex of $T(P_8)$, a contradiction. \\
%
%Moreover, since $|c^{-1}(3)|=2$, it follows that $c(u_3u_4)=c(u_7)=3$. Furthermore, the fact that $|c^{-1}(2)| = 3$ implies $c(u_6u_7)=c(u_4)=2$ and $\{c(u_1),c(u_1u_2)\}=\{1,2\}$. Then, $|c^{-1}(4)|\leq 1$, a contradiction to the assumption. \\
Next, suppose that $c(u_2)=5$. It follows that $\{c(u_8),c(u_7u_8)\}=\{1,5\}$. Moreover, since $|c^{-1}(2)|= 3$, we know that $c(u_1u_2)=2$. Additionally, from the fact that $|c^{-1}(4)|= 2$ we derive that the vertices $u_2u_3$ and $u_7$ are colored with color $4$ by $c$. In order to satisfy the assumption $|c^{-1}(2)|= 3$ we set now $c(u_6u_7)=2$. But this means that $|c^{-1}(3)|\leq 1$, a contradiction.  \\
It remains to consider the case when $c(u_1u_2)=5$. Since $|c^{-1}(1)| = 5$, $|c^{-1}(2)| = 3$, $|c^{-1}(5)|=2$, $c$ assigns three distinct colors, namely $1$, $2$ and $5$, to the vertices $u_7$, $u_8$ and $u_7u_8$. Consequently,  $c(u_2)=c(u_6u_7)=4$ and then, $c(u_2u_3)=c(u_6)=3$. But this means that there are no five vertices, which are colored with $1$, a contradiction.  %If $c(u_3)=2$, then also $c(u_5u_6)=c(u_8)=2$. Consequently, considering $|c^{-1}(1)| = 5$, we have $c(u_7u_8)=c(u_6)=1$. Furthermore, we observe that $\{c(u_2),c(u_2u_3)\}=\{c(u_7),c(u_6u_7)\}=\{3,4\}$. Namely, $|c^{-1}(3)|=|c^{-1}(4)| = 2$ by the assumption. But this contradicts the fact that $|c^{-1}(1)|=5$. \\
%Next, suppose that vertex $u_2u_3$ receives color $2$ by $c$. We assumed that $|c^{-1}(2)| = 3$, which means that $\{c(u_8),c(u_7u_8)\}=\{1,2\}$. Then, the assumption $|c^{-1}(4)| = 2$ implies that $c(u_2)=4$. It holds also that $|c^{-1}(5)| = 2$, therefore $\{c(u_7),c(u_6u_7)\}=\{4,5\}$. In this way, we obtained a contradiction to the fact that $|c^{-1}(3)| = 2$.\\
%Now, assume that $c(u_2)=2$. By the assumption $|c^{-1}(2)| = 3$ we have that $2\in\{c(u_5),c(u_4u_5),c(u_5u_6)\}$ and $2\in\{c(u_7),c(u_8),c(u_7u_8)\}$. In the case when $c(u_2u_3)=4$, we derive a contradiction to the assumption $|c^{-1}(2)| = 3$, if we additionally satisfy the assumptions $|c^{-1}(4)|=|c^{-1}(5)| = 2$, which means that $\{c(u_7),c(u_8),c(u_6u_7),c(u_7u_8)\}=\{1,2,4,5\}$. Otherwise, in the case when one of the vertices $u_3$ and $u_3u_4$ is colored with color $4$ by $c$, the assumption $|c^{-1}(4)|=2$ give us the equality $\{c(u_8),c(u_7u_8)\}=\{1,4\}$. Moreover, since $|c^{-1}(2)|=3$ and $|c^{-1}(5)|=2$, we infer that $c(u_7)=2$ and $c(u_6u_7)=5$. Further, this means that $c(u_1)=5$ and $c(u_1u_2)=1$. Now, considering the assumption $|c^{-1}(3)|=2$, we conclude that $c(u_2u_3)=c(u_6)=3$, which all together contradicts the assumption that $|c^{-1}(1)|=5$. 
%
\textbf{Case 2.3} $|c^{-1}(5)| = 1$ and $|c^{-1}(6)| = 2$. \\
Note that in this case $\sum_{i=1}^6|c^{-1}(i)| = 15$, which means that all of the above written bounds have to be achieved. Since $|c^{-1}(6)| = 2$ we know that one of the vertices from $\{u_1, u_1u_2\}$ and one of the vertices from $\{u_8, u_7u_8\}$ receives the color $6$ by $c$. Furthermore, at least one of the mentioned vertices receives $1$ by $c$. Withous loss of generality we may assume that $\{c(u_1), c(u_1u_2)\}=\{1,6\}$. \\
If $c(u_1)=6$ and $c(u_1u_2)=1$, then $\{c(u_8), c(u_7u_8)\}=\{1,6\}$. Moreover, since  $|c^{-1}(2)| = 3$ and  $|c^{-1}(4)| = 2$, we have $\{c(u_2),c(u_2u_3)\}=\{c(u_7), c(u_6u_7)\}=\{2,4\}$.  Consequently, it is impossible that two vertices from $T(P_8)$ receive a color $3$ by $c$, a contradiction. \\
Otherwise, if $c(u_1)=1$ and $c(u_1u_2)=6$, we know that $c(u_8)=6$ and one of the vertices from $\{u_7, u_7u_8\}$ receives $1$ by $c$. We have also found out that one of the vertices from $\{u_2, u_2u_3\}$ and one from $\{u_7, u_7u_8\}$ receive $2$ by $c$. Consequently, $c(u_2)=c(u_6u_7)=4$ and again, it is impossible to color two vertices from $T(P_8)$ by $3$, a contradiction.

\textbf{Case 2.4} $|c^{-1}(5)| = 1$ and $|c^{-1}(6)| = 1$. \\ 
Since $\sum_{i=1}^6|c^{-1}(i)| = 14$, we have a contradiction to $c$ being a $6$-packing total coloring of $P_8$, hence we are done. 
\end{adjustwidth}
\vspace{0.1cm}
Based on these findings, we can conclude that $\chi_{\rho}^{''}(P_8) \geq 7$. To prove that $\chi_{\rho}^{''}(P_8) \leq 7$, we form a $7$-packing coloring of $T(P_8)$ as follows. First, color the consecutive vertices $u_1, u_2, \ldots, u_8$ of $T(P_8)$ one after another by colors $1, 4,1,7,2,1,3,2$. Next, the consecutive vertices $u_1u_2, u_2u_3, \ldots, u_7u_8$ of $T(P_8)$ let receive colors $6,2,3,1,5,4,1$. Since the described coloring is indeed $7$-packing total coloring of $P_8$, we conclude that $\chi_{\rho}^{''}(P_8) = 7$.

\vspace{0.3cm}
Now, consider the case when $n=13$. Since $T(P_8)$ is a subgraph of $T(P_{13})$, we know that $\chi_{\rho}^{''}(P_{13}) \geq  7$. Further, color the vertices of $T(P_{13})$ as follows: let the vertices $u_1u_2, u_3, u_5, u_6u_7, u_8, u_9u_{10}, u_{11}u_{12}, u_{13}$ receive a color $1$, the vertices $u_1, u_3u_4, u_6, u_8u_9$ and $u_{11}$ a color $2$, to the vertices $u_2, u_5u_6, u_9, u_{12}u_{13}$ assign a color $3$, to $u_2u_3, u_7u_8, u_{12}$ a color $4$, to $u_4u_5$ and $u_{10}$ a color $5$, to $u_4$ and $u_{10}u_{11}$ a color $6$ and to the remaining vertex a color $7$. Clearly, this coloring is a $7$-packing total coloring of $P_{13}$, thus, $\chi_{\rho}^{''}(P_{13}) = 7$. Next, for any $i \in \{9, 10, 11, 12\}$, $P_8$ is a subgraph of $P_i$ and $P_i$ is a subgraph of $P_{13}$, hence we conclude that $\chi_{\rho}^{''}(P_{i}) =  7$.

\vspace{0.3cm}
Finally, let $n \geq 14$ be an arbitrary integer. The fact that $T(P_{13})$ is a subgraph of $T(P_n)$ implies $\chi_{\rho}^{''}(P_n) \geq  7$. On the other hand, note that $T(P_n)$ is a subgraph of a distance graph $D(1,2)$. Since $\chi_\rho(D(1,2))=8$ (see~\cite{togni-2014}), we derive that $\chi_{\rho}^{''}(P_n)$ is bounded from above by $8$. This concludes the proof.
    \qed
\end{proof}

 By Theorem \ref{izrek:poti}, we proved that for any $n \geq 14$, $\chi_{\rho}^{''}(P_n) \in \{7,8\}$. Moreover, using a computer, we found out that $\chi_{\rho}^{''}(P_n) =8$ for any $n \geq 14$. Note that it would be possible to provide a case-by-case proof for this result but that would take a very long time. Therefore, we have the following result. 
    
\begin{remark}
\label{remark:poti_racunalnik}
For any $n \geq 14$, $\chi_{\rho}^{''}(P_n) =8$.
\end{remark}

Next, we continue with determining the packing total chromatic numbers for cycles $C_n$, where $n \geq 3$.

Let $C_n$, $n \geq 3$, be a cycle. Denote the consecutive vertices of this cycle by $u_1, u_2, \ldots, u_n$. Then, $V(T(C_n))=\{u_1, u_2, \ldots, u_n, u_1u_2,u_2u_3, \ldots, u_{n-1}u_n, u_nu_1\}$ and $E(T(C_n))=\{\{u_i,u_{i+1}\};~1 \leq i \leq n-1\} \cup \{\{u_i,u_iu_{i+1}\};~1 \leq i \leq n-1\} \cup \{\{u_i,u_{i-1}u_{i}\};~2 \leq i \leq n\}  \cup \{\{u_iu_{i+1}, u_{i+1}u_{i+2}\};~1 \leq i \leq n-2\} \cup \{\{u_1,u_n\}, \{u_n, u_nu_1\},\{u_1, u_nu_1\}, \{u_{n-1}u_n, \\u_nu_1\}, \{u_1u_2, u_nu_1\}\}$. An example of a $T(C_n)$, namely $T(C_5)$, with the vertices denoted as described, is shown in Fig.~\ref{fig:figure_T(C_5)}.

%%% SLIKA ZA TOTAL GRAPH ZA C5

\begin{figure}[h]
\begin{center}
\begin{tikzpicture}[scale=1.1, 
  vertex/.style={circle, draw=black, fill=white, inner sep=2pt},
  every label/.style={font=\small}
  ]
\node[vertex, label=above:$u_1$] (u1) at (90:2.5) {};
  \node[vertex, label=right:$u_2$] (u2) at (18:2.5) {};
  \node[vertex, label=below right:$u_3$] (u3) at (-54:2.5) {};
  \node[vertex, label=below left:$u_4$] (u4) at (-126:2.5) {};
  \node[vertex, label=left:$u_5$] (u5) at (162:2.5) {};

  % Edge-nodes positioned inside the pentagon
  \node[vertex, label=above left:$u_1u_2$] (e1) at (54:1.5) {};
  \node[vertex, label=left:$u_2u_3$] (e2) at (-18:1.5) {};
  \node[vertex, label=below:$u_3u_4$] (e3) at (-90:1.5) {};
  \node[vertex, label=right:$u_4u_5$] (e4) at (-162:1.5) {};
  \node[vertex, label=below right:$u_5u_1$] (e5) at (126:1.5) {};

  % Edges of the original C5 cycle
  \draw (u1) -- (u2);
  \draw (u2) -- (u3);
  \draw (u3) -- (u4);
  \draw (u4) -- (u5);
  \draw (u5) -- (u1);

  % Connections from vertices to edge-nodes (incidence)
  \draw (u1) -- (e1);
  \draw (u2) -- (e1);
  \draw (u2) -- (e2);
  \draw (u3) -- (e2);
  \draw (u3) -- (e3);
  \draw (u4) -- (e3);
  \draw (u4) -- (e4);
  \draw (u5) -- (e4);
  \draw (u5) -- (e5);
  \draw (u1) -- (e5);

  % Edges between edge-nodes if they share a common vertex
  \draw (e1) -- (e2);
  \draw (e2) -- (e3);
  \draw (e3) -- (e4);
  \draw (e4) -- (e5);
  \draw (e5) -- (e1);
\end{tikzpicture}
\end{center}
\caption{A total graph of $C_5$.}

\label{fig:figure_T(C_5)}
\end{figure}
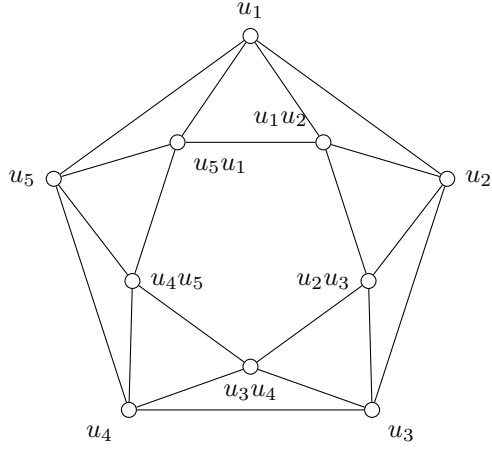

%%%

With the following theorem, we provide the exact values or bounds for the packing total chromatic numbers of cycles $C_n$, $n \geq 3$.

\begin{theorem}
    $$\chi_{\rho}^{''}(C_n) =
\left\{\begin{array}{ll}
5; & n=3\,,\\
7; & n \in \{4,5\}\,,\\
8; & n=6\,,\\
9; & n \in \{7,8,9,11\}\,,\\
10; & n \in \{10, 12, 13\}\,.
\end{array}\right.$$
Moreover, for any $n \geq 14$, $7  \leq \chi_{\rho}^{''}(C_n) \leq 11$.
\label{izrek:cikli}
\end{theorem}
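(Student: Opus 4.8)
The plan is to first translate the problem into packing--coloring a circulant graph. Interleaving the vertices and edges of $C_n$ in cyclic order, $u_1, u_1u_2, u_2, u_2u_3, \ldots, u_n, u_nu_1$, identifies $V(T(C_n))$ with $\mathbb{Z}_{2n}$, and the adjacencies listed for $T(C_n)$ show that two such elements are adjacent precisely when their indices differ by $1$ or $2$ modulo $2n$. Thus $T(C_n)$ is the circulant $C_{2n}(1,2)$, the cyclic analogue of $D(1,2)$, in which the distance between positions $a$ and $b$ equals $\lceil \delta/2\rceil$, where $\delta$ is their cyclic distance on $C_{2n}$. Since $\chi_{\rho}^{''}(C_n)=\chi_{\rho}(T(C_n))$, the theorem reduces to computing $\chi_{\rho}(C_{2n}(1,2))$.

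Next I would establish the density bound that drives every lower bound. By the distance formula, two elements sharing color $i$ must lie at cyclic distance at least $2i+1$ on $C_{2n}$, so any color class of color $i$ has at most $\max\{1,\lfloor 2n/(2i+1)\rfloor\}$ elements; hence a $k$-packing total coloring requires $\sum_{i=1}^{k}\max\{1,\lfloor 2n/(2i+1)\rfloor\}\ge 2n$. For each small value $n\in\{3,\ldots,12\}$ this inequality already yields the stated lower bound: for instance $C_6(1,2)$ is the octahedron, of diameter $2$, so $\chi_{\rho}=|V|-\alpha+1=5$, and for $n=6,10,12$ the partial sums first reach $2n$ only at $k=8,10,10$. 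Where the raw inequality falls one short of the claimed value, I would sharpen it by a bounded case analysis on which elements can simultaneously carry the densest colors $1,2,3$, in the same spirit as the $P_8$ argument above. The matching upper bounds for these $n$ follow by exhibiting an explicit coloring of $T(C_n)$ and checking the finitely many packing constraints directly.

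For $n\ge 13$ the lower bound $\chi_{\rho}^{''}(C_n)\ge 7$ is immediate, since $\sum_{i=1}^{6}\lfloor 2n/(2i+1)\rfloor<2n$ because the coefficients satisfy $\tfrac13+\tfrac15+\cdots+\tfrac1{13}<1$; alternatively $P_9\subseteq C_n$ together with the hereditary property and Theorem~\ref{izrek:poti} gives $\chi_{\rho}^{''}(C_n)\ge\chi_{\rho}^{''}(P_9)=7$. For the upper bound $\chi_{\rho}^{''}(C_n)\le 11$ I would fix a small modulus $L$ and, for each residue of $2n$ modulo $L$, build a coloring of $C_{2n}(1,2)$ from a repeated ``bulk'' block using only low colors together with one bounded ``seam'' block in which the sparse colors $8,9,10,11$ are placed on mutually distant positions; one then verifies that every packing constraint survives the wrap-around.

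I expect this upper bound for $n\ge 13$ to be the main obstacle: one must produce a single scheme valid for \emph{all} large $n$ and, in particular, check the constraints of the densest colors $1,2,3$ at the seam where the periodic pattern meets the correction block, since these are exactly the colors sensitive to small perturbations of the period. The lower bounds, by contrast, are essentially mechanical consequences of the counting inequality, requiring at most the kind of finite case analysis already used for paths.
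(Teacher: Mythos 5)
Your skeleton matches the paper's: counting lower bounds plus explicit colorings for $3\le n\le 12$, and for $n\ge 13$ the lower bound via $P_n\subseteq C_n$ together with an $8$-periodic $D(1,2)$-type pattern wrapped around the cycle and repaired at the seam. Your formalization of the lower bounds is in fact cleaner than the paper's: identifying $T(C_n)$ with the circulant $C_{2n}(1,2)$ and using that color class $i$ occupies positions at pairwise cyclic distance at least $2i+1$, hence has size at most $\max\{1,\lfloor 2n/(2i+1)\rfloor\}$, already forces every stated lower bound for $n\in\{3,\dots,12\}$ (e.g.\ for $n=7$ it gives $|c^{-1}(1)|\le 4$ directly, where the paper starts from $\le 5$ and argues that $5$ is unattainable), and the estimate $\frac13+\frac15+\dots+\frac1{13}<1$ gives the bound $\ge 7$ for all large $n$ in one line.

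The genuine gap is the upper bound $\chi_\rho^{''}(C_n)\le 11$ for $n\ge 13$, which you correctly flag as the main obstacle but leave entirely as a plan. This is not a routine verification: the period of Togni's pattern is $54$ while $2n$ ranges over all even residues, so one must examine every possible alignment of the pattern against the wrap-around point, and in the worst alignments \emph{four} colors (for instance $2,4,6,8$, or $1/5$ together with $4,6,8$, or two occurrences of $7$ plus others) simultaneously violate their distance constraints near the seam. The naive repair you describe --- placing the conflicting seam elements on fresh sparse colors --- would then consume four new colors and only yield $\chi_\rho^{''}(C_n)\le 12$. Getting down to $11$ requires nontrivial local recolorings that reuse existing colors (the paper, e.g., swaps the colors on $u_1$ and $u_1u_2$, or re-places colors $1,2,3,5,7$ at the seam) and a case enumeration of all problematic configurations; none of this is supplied or even shown to be possible by your outline. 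A smaller but real omission is that the optimal colorings witnessing the upper bounds for $n\in\{5,\dots,12\}$ are asserted to exist rather than exhibited; these are finite checks but they are half the content of the theorem for small $n$.
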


\begin{proof}
Let $C_n$, $n \geq 3$, be a cycle with the vertices denoted as described above. 

\vspace{0.3cm}
First, let $n =3$. Since $\diam(T(C_3))=2$, any optimal packing coloring of $T(C_3)$ uses $|V(T(C_3)|-\alpha(T(C_3))+1$ colors. Using the fact that $\alpha(T(C_3))=2$, we derive that $\chi_{\rho}^{''}(C_3)=5$. Analogously we derive that $\chi_{\rho}^{''}(C_4)=7$ since $\diam(T(C_4))=2$ and $\alpha(T(C_4))=2$.

\vspace{0.3cm}
Now, consider a cycle $C_5$. We observe that $\diam(T(C_5))=3$ and there do not exist three vertices of $T(C_5)$ which are pairwise at distance $3$. Additionally, we can see that $\alpha(T(C_5))=3$. This means that applying any optimal packing coloring of $T(C_5)$ we can color at most three vertices with color $1$, at most two vertices with $2$ and only one vertex with any other color. Hence, $\chi_{\rho}^{''}(C_5) \geq 7$. Next, we form a packing coloring $c$ of $T(C_5)$ as follows: let $u_1, u_4$ and $u_2u_3$ receive color $1$ by $c$, the vertices $u_2$ and $u_4u_5$ a color $2$ by $c$ and the remaining vertices five additional colors. Since $c$ presents a $7$-packing total coloring of $C_5$, $\chi_{\rho}^{''}(C_5) = 7$.

\vspace{0.3cm}
We continue with considering a cycle $C_6$. Similarly as in the case of $C_5$, we observe that $\diam(T(C_6))=3$, in $T(C_6)$ do not exist three vertices which are pairwise at distance $3$ and $\alpha(T(C_6))=4$. Consequently, any optimal packing coloring of $T(C_6)$ assigns a color $1$ to at most four vertices, a color $2$ to at most two vertices and any other color to at most one vertex. Hence, $\chi_{\rho}^{''}(C_6) \geq 8$. In order to prove that $\chi_{\rho}^{''}(C_6) = 8$ we form a $8$-packing coloring $c$ of $T(C_6)$. Let $u_1, u_4, u_2u_3, u_5u_6$ receive $1$ by $c$, $u_2$ and $u_5$ a color $2$ by $c$ and the other vertices pairwise distinct colors from $\{3,4,5,6,7,8\}$. Indeed, $c$ is a $8$-packing total coloring of $C_6$ and thus, $\chi_{\rho}^{''}(C_6) = 8$.
\vspace{0.3cm}

Now, let $c$ be any optimal packing coloring of $T(C_7)$. We derive that $|c^{-1}(1)| \leq 5$, $|c^{-1}(2)| \leq 2$, $|c^{-1}(3)| \leq 2$ and $|c^{-1}(i)| \leq 1$ for any $i \geq 4$. If $\chi_{\rho}^{''}(C_7) = 8$ then all of these bounds should be achieved. But it is easy to see that there are no five vertices of $T(C_7)$ which all can receive a color $1$ by $c$. Thus,  $\chi_{\rho}^{''}(C_7) \geq 9$. Now, we form a $9$-packing coloring $c$ of $T(C_7)$. First, let $c(u_2)=c(u_4)=c(u_7)=c(u_5u_6)=1$, $c(u_3)=c(u_6u_7)=2$ and $c(u_6)=c(u_2u_3)=3$. Next, color the remaining six vertices with colors $4,5,6,7,8,9$. Therefore, $\chi_{\rho}^{''}(C_7) =9$.

\vspace{0.3cm}
Now, consider a cycle $C_8$. We observe that any optimal packing coloring of $T(C_8)$ assigns a color $1$ to at most five vertices, a color $2$ to at most three vertices, a color $3$ to at most two vertices and any other color to at most one vertex of $T(C_8)$. Consequently, $\chi_{\rho}^{''}(C_8) \geq 9$. Additionally, note that there exists an optimal packing coloring of $T(C_8)$ which assigns a color $1$ to the vertices $u_2, u_5, u_7, u_3u_4$ and $u_8u_1$, a color $2$ to the vertices $u_4, u_1u_2, u_6u_7$, a color $3$ to $u_1$ and $u_4u_5$, and six additional colors to the remaining six vertices. Consequently, $\chi_{\rho}^{''}(C_8) = 9$.

\vspace{0.3cm}
Similarly as in the case of $C_8$ we can also prove that $\chi_{\rho}^{''}(C_9) = 9$. Namely, any optimal packing coloring of $T(C_9)$ assigns a color $1$ to at most six vertices, a color $2$ to at most three vertices, colors $3$ and $4$ each to at most two vertices, and any other color to at most one vertex of $T(C_9)$. Since $|V(T(C_9))|=18$, we know that $\chi_{\rho}^{''}(C_9) \geq 9$. Further, we form a $9$-packing coloring $c$ of $T(C_9)$ to prove that $\chi_{\rho}^{''}(C_9) = 9$. First, let $c$ assigns a color $1$ to the vertices $u_2, u_5, u_8, u_3u_4, u_6u_7$ and $u_9u_1$, a color $2$ to the vertices $u_4, u_1u_2, u_7u_8$, a color $3$ to $u_7$ and $u_2u_3$, a color $4$ to $u_1$ and $u_5u_6$, and five distinct colors to the remaining five vertices. This coloring proves that $\chi_{\rho}^{''}(C_9) = 9$.

\vspace{0.3cm}
Next, let $n=10$. We observe that any packing coloring of $T(C_{10})$ assigns a color $1$ to at most six vertices, a color $2$ to at most four vertices, colors $3$ and $4$ each to at most two vertices and any other color to at most one vertex. Consequently, it uses at least $10$ colors, hence $\chi_{\rho}^{''}(C_{10}) \geq 10$. In order to prove that $\chi_{\rho}^{''}(C_{10}) \leq 10$, we construct the following packing coloring of $T(C_{10})$. Let the vertices $u_1u_2, u_3, u_5, u_6u_7, u_8u_9$ and $u_{10}$ receive a color $1$, the vertices $u_2, u_4u_5, u_7$ and $u_9u_{10}$ receive a color $2$, the vertices $u_4$ and $u_7u_8$ color with $3$, the vertices $u_3u_4$ and $u_8$ with $4$ and the remaining vertices with six additional colors. Then, $\chi_{\rho}^{''}(C_{10}) = 10$. 

\vspace{0.3cm}
Now, we consider the case of $C_{11}$. It is easy to see that any packing coloring of $T(C_{11})$ assigns a color $1$ to at most seven vertices, a color $2$ to at most four vertices, a color $3$ to at most three vertices, colors $4$ and $5$ each to at most two vertices and any other color to at most one vertex. Consequently, $\chi_{\rho}^{''}(C_{11}) \geq 9$. Now, we will prove that $\chi_{\rho}^{''}(C_{11}) \leq 9$ by constructing a $9$-packing coloring of $T(C_{11})$. Let the vertices $u_1, u_2u_3, u_4, u_5u_6, u_7, u_9$ and  $u_{10}u_{11}$ receive a color $1$, the vertices $u_2, u_4u_5, u_7u_8$ and $u_{10}$ receive a color $2$, the vertices $u_1u_2, u_5$ and $u_8u_9$ a color $3$, the vertices $u_6u_7$ and $u_{11}$ a color $4$, the vertices $u_6$ and $u_{11}u_{1}$ a color $5$ and the remaining vertices colors $6,7,8$ and $9$. Therefore, $\chi_{\rho}^{''}(C_{11}) = 9$. 

\vspace{0.3cm}
We continue with $C_{12}$. To prove that $\chi_{\rho}^{''}(C_{12}) \geq 10$ we suppose to the contrary that there exists a $9$-packing coloring $c$ of $T(C_{12})$. Note that $c$ assigns a color $1$ to at most eight vertices, a color $2$ to at most four vertices, a color $3$ to at most three vertices, colors $4$ and $5$ each to at most two vertices and any other color to at most one vertex. It follows that $c$ can assign colors only to $23$ vertices, but $|V(T(C_{12}))|=24$, a contradiction. To prove that $\chi_{\rho}^{''}(C_{12}) \leq 10$ we form a $10$-packing coloring of $T(C_{12})$. First, let the vertices $u_1, u_2u_3, u_4, u_5u_6, u_7, u_8u_9, u_{10}$ and $u_{11}u_{12}$ receive a color $1$, the vertices $u_2, u_5, u_8$ and $u_{10}u_{11}$ receive a color $2$, the vertices $u_3, u_6u_7$ and $u_{11}$ color with $3$, the vertices $u_6$ and $u_{12}$ with $4$, the vertices $u_1u_2$ and $u_{7}u_{8}$ with $5$ and the remaining vertices with five additional colors. Consequently, $\chi_{\rho}^{''}(C_{12}) = 10$. 

\vspace{0.3cm}
Next, consider a cycle $C_{13}.$ First, we prove that $\chi_{\rho}^{''}(C_{13}) \geq 10$. It is clear that any packing coloring $c$ of $T(C_{13})$ using $9$ colors assigns color $1$ to at most eight vertices, color $2$ to at most five vertices, color $3$ to at most three vertices, each of the colors $4, 5$ and $6$ to at most two vertices and each of the remaining three colors to at most one vertex. This implies that using $c$ we can color only $25$ vertices of $T(C_{13})$, whereas $|V(T(C_{13}))|=26$, a contradiction. Therefore, $\chi_{\rho}^{''}(C_{13}) \geq 10$. We continue by showing that $\chi_{\rho}^{''}(C_{13}) \leq 10$. To this end, we construct a $10$-packing coloring of $T(C_{13})$ as follows. Assign color $1$ to the vertices $u_1, u_2u_3, u_4, u_5u_6, u_7u_8, u_9$, $u_{10}u_{11}$ and $u_{12}u_{13}$, color $2$ to the vertices $u_1u_2, u_4u_5, u_7, u_9u_{10}$ and $u_{12}$, color $3$ to the vertices $u_2, u_6$ and $u_{10}$, color $4$ to the vertices $u_3$ and $u_8$, color $5$ to the vertices $u_3u_4$ and $u_{11}$, and color $6$ to the vertices $u_5$ and $u_{11}u_{12}$. Finally, to the remaining vertices assign colors $7,8,9$ and $10$. It follows that $\chi_{\rho}^{''}(C_{13}) = 10$.

\vspace{0.3cm}
Let $n \geq 14$ be an arbitrary integer. Using the facts that $\chi_\rho^{''}(P_n) \geq 7$ and $\chi_\rho(D(1,2))=8$~\cite{togni-2014}, we will prove that $7 \leq \chi_\rho^{''}(C_n) \leq 11$. 

Note that $C_n$ contains a subgraph, isomorphic to $P_n$, which means that $\chi_\rho^{''}(C_n) \geq \chi_\rho^{''}(P_n)$. By Theorem~\ref{izrek:poti}, $\chi_\rho^{''}(P_n) \geq 7$, which implies the desired lower bound for $\chi_\rho^{''}(C_n)$.

In order to prove that $\chi_\rho^{''}(C_n) \leq 11$, we first write $8$-packing coloring of $D(1,2)$, presented in~\cite{togni-2014}. This coloring will be used in the continuation of the proof and is constructed by repeating the following color pattern: $8, 1, 2, 6, 1, 4, 3, 2, 1, 5,7, 1, 2, 3, 4, 1, 6$, $2$, $1$, $8$, $3$, $1$, $2$, $4, 1, 5, 7, 1, 3, 2, 1, 6, 4, 1, 2, 3, 1, 8, 5, 1, 2, 4, 1, 3, 6, 1, 2, 7, 1, 5, 4, 2, 1, 3$.

To prove that $\chi_\rho^{''}(C_n) \leq 11$, we form a $11$-packing coloring $c$ of $T(C_n)$ using the presented packing coloring of $D(1,2)$. First, color the consecutive vertices $u_1, u_1u_2, u_2, u_2u_3$, $u_3, \ldots, u_n, u_nu_1$ of $T(C_n)$ one after another by applying the above presented color pattern (for $D(1,2)$). In this way, $c$ assigns colors to all vertices of $T(C_n)$, but in some cases it is not proper packing coloring of $T(C_n)$. Hence, in some cases, we have to re-color some vertices, namely to change the colors, which were assigned to these vertices by $c$. 
Since $T(P_n)$ is an induced subgraph of $D(1,2)$, $c$ is a proper $8$-packing coloring for $T(P_n)$. This implies that the distance between any two distinct vertices $u,v \in V(T(C_n))$ with $c(u)=c(v)=i$, $1 \leq i \leq 8$, is at least $i+1$, except maybe in the cases when $u$ (resp., $v$) belongs to $\{u_{n-8}, u_{n-8}u_{n-7}, u_{n-7}, \ldots, u_n, u_nu_1\}$ and $v$ (resp., $u$) belongs to $\{u_1, u_1u_2, u_2, \ldots, u_8, u_8u_9\}$. Considering the possibilities for currently obtained coloring $c$, we observe that inappropriate packing colorings of $T(C_n)$ occur if at least one of the following statements holds: 

\begin{itemize}
\item (A) $c(u_nu_1) \in \{1,5\}$;
\item (B) $c(u_n)=2$ or $c(u_nu_1)=2$;
\item (C) $c(u_n)=4$ or $c(u_nu_1)=4$ or $c(u_{n-1}u_n)=4$;
\item (D) a vertex from $\{u_{n-4}u_{n-3}, u_{n-3}, \ldots,u_n,u_nu_1\}$ receives a color $6$ by $c$;
\item (E) a vertex from $\{u_{n-1},u_{n-1}u_{n},u_n,u_nu_1\}$ receives a color $7$ by $c$;
\item (F) a vertex from $\{u_{n-7}, u_{n-7}u_{n-6}, \ldots,u_n,u_nu_1\}$ receives a color $8$ by $c$.
\end{itemize}
%If all of these statements hold, then we obtain a proper $13$-packing coloring of $T(C_n)$ from $c$ as follows. First, let $c(u_nu_1) =9$, $c(u_n)=10$ and $c(u_{n-1}u_n)=11$. Then, we have to recolor the vertex from $\{u_{n-4}u_{n-3}, u_{n-3}, \ldots,u_n,u_nu_1\}$, colored with $6$, with a color $12$, and the vertex from $\{u_{n-7}, u_{n-7}u_{n-6}, \ldots,u_n,u_nu_1\}$, which is colored by $8$, with $13$. Since the obtained coloring $c$ is a proper $13$-packing total coloring of $C_n$, we have  $\chi_\rho^{''}(C_n) \leq 13$. 
Note that, regardless of which vertex in $\{u_{n-8}, u_{n-8}u_{n-7}, u_{n-7}, \ldots, u_n, u_nu_1\}$ receives a color $3$ by $c$, any two distinct vertices of $T(C_n)$ that are both colored with $3$ are at a distance greater than $3$. Therefore, color $3$ does not cause any conflicts.

In continuation of this proof, we distinguish two cases based on the validity of statement (E).

\textbf{Case 1.} Statement (E) does not hold. 

An analysis of the presented color pattern for $D(1,2)$ and coloring $c$ shows that the conditions $c(u_n) = 2$ and $c(u_nu_1) \in \{1,5\}$ cannot hold at the same time. This means that $\chi_\rho^{''}(C_n) \leq 12$. \\
Next, if $V(T(C_n))$ contains two vertices at distance at most $2$, but both colored with $2$, then these two vertices are $u_2$ and one vertex from $\{u_n, u_nu_1\}$. \\
If at least one of the statements (C), (D) or (F) does not hold, then we can construct an $11$-packing coloring of $T(C_n)$ as follows. First, we replace a color $2$, assigned to a vertex from $\{u_n, u_nu_1\}$ with a color $9$. Next, we have to execute a recoloring of two of the following vertices: a vertex from $\{u_n, u_nu_1, u_{n-1}u_n\}$ colored with $4$, a vertex from $\{u_{n-4}u_{n-3}, u_{n-3}, \ldots,u_n,u_nu_1\}$ colored with $6$ and a vertex from $\{u_{n-7}, u_{n-7}u_{n-6}, \ldots,$ $u_n,u_nu_1\}$, which is colored by $8$. Indeed, since one of the statements (C), (D) or (F) does not hold, we have to recolor only two of the mentioned vertices with two additional colors, $10$ and $11$. In this way, we obtain an $11$-packing total coloring of $C_n$ and thus, $\chi_\rho^{''}(C_n) \leq 11$. \\
Now, consider the case when the statements (C), (D) and (F) hold, which means that $V(T(C_n))$ contains the vertices colored with colors $4, 6$ and $8$ that are at inappropriate distances. Recall that color $2$ is also problematic, since there exist two vertices of $V(T(C_n))$, both colored with $2$, which are at distance at most $2$.
More precisely, we have one of the situations presented in Fig.~\ref{fig:situation1}, Fig.~\ref{fig:situation2}
or Fig.~\ref{fig:situation3} (see the graph at the top of each figure). The problematic vertices and their colors are marked in red.
In any of these three situations, we can form a proper packing coloring of $T(C_n)$ using only $11$ colors. Indeed, such colorings are presented in Fig.~\ref{fig:situation1}, Fig.~\ref{fig:situation2}
and Fig.~\ref{fig:situation3} (see the graph presented at the bottom of each figure). Consequently, $\chi_\rho^{''}(C_n) \leq 11$. \\
It remains to consider the case when a color $2$ is not problematic (in other words, any two vertices of $T(C_n)$, both colored with $2$, are pairwise at distance at least $3$). \\
If also colors $1$ and $5$ are not problematic, then $\chi_\rho^{''}(C_n) \leq 11$. Indeed, in this case a vertex from $\{u_n, u_{n}u_1, u_{n-1}u_n\}$, colored with $4$, recolor using a color $9$. Next, a vertex from $\{u_{n-4}u_{n-3}, u_{n-3}, \ldots,u_n,u_nu_1\}$, colored with $6$, recolor by using a color $10$ and instead of a color $8$ on a vertex from $\{u_{n-7}, u_{n-7}u_{n-6}, \ldots,u_n,u_nu_1\}$ we use a color $11$. \\
Next, assume that a color $1$ or a color $5$ is problematic (recall that both cannot be problematic at the same time). Clearly, if at least one of the statements (C), (D) or (F) is not fulfilled, then we can form an $11$-packing coloring of $T(C_n)$ from $c$ by recoloring of three vertices from $V(T(C_n))$. Otherwise, we have a situation presented in Fig.~\ref{fig:situation4} (see the graph at the top of the figure). In this situation, we can construct a proper packing coloring of $T(C_n)$ as shown in Fig.~\ref{fig:situation4} - a graph at the bottom of the figure. Since this coloring uses only $11$ colors, $\chi_\rho^{''}(C_n) \leq 11$. 

%%%%%%%%%%%%%%%%%%%%%%%%%%%%%%%%%%%%%%%%%%%%%%%
\begin{figure}[h]
\begin{center}
%
% ZAPOREDJE 1 -- PRVA VERZIJA
\begin{tikzpicture}[scale=0.4, font=\small]
\def\dx{2} % horizontal spacing
\def\dy{1.5} % vertical spacing for edge nodes
%
% 1. DRAW vertices and label them u_1, u_2, ..., u_n
\foreach \i in {1,...,8} {
    \fill (\i*\dx, 0) circle (2pt);
  }
%Barve zgoraj 
\node[left] at (1*\dx, 0) {$u_{1}$};
  \node[above] at (1*\dx, 0) {8};
  \node[above] at (2*\dx, 0) {2};
  \node[above] at (3*\dx, 0) {1};
  \node[above] at (4*\dx, 0) {3};
  \node[above] at (5*\dx, 0) {1};
  \node[above] at (6*\dx, 0) {7};
  \node[above] at (7*\dx, 0) {2};
  \node[above] at (8*\dx, 0) {4};

% 2. DRAW edge-nodes and label them u_iu_{i+1}
\foreach \i in {1,...,7} {
    \pgfmathtruncatemacro{\j}{\i + 1}
    \coordinate (mid\i) at ({(\i)*\dx}, -\dy);
    \fill (mid\i) circle (2pt);

    % Connect edge-node to its endpoints
    \draw (mid\i) -- (\i*\dx, 0);
    \draw (mid\i) -- (\j*\dx, 0);
    % Connect u_i -- u_{i+1}
    \draw (\i*\dx, 0) -- (\j*\dx, 0);}
% Se za u_8u_9
    \coordinate (mid8) at ({8*\dx}, -\dy);
    \fill (mid8) circle (2pt);
      \draw (mid8) -- (8*\dx, 0);

\node[below] at (1*\dx, -\dy) {1};
\node[below] at (2*\dx, -\dy) {6};
\node[below] at (3*\dx, -\dy) {4};
\node[below] at (4*\dx, -\dy) {2};
\node[below] at (5*\dx, -\dy) {5};
\node[below] at (6*\dx, -\dy) {1};
\node[below] at (7*\dx, -\dy) {3};
\node[below] at (8*\dx, -\dy) {1};

% 3. CONNECT edge-nodes to each other if they share a vertex
% (i.e., u_iu_{i+1} -- u_{i+1}u_{i+2})
\foreach \i in {1,...,7} {
    \pgfmathtruncatemacro{\j}{\i + 1}
    \draw (mid\i) -- (mid\j);}
% 4. ADD DOTS for continuation
\node at ({(9)*\dx}, 0) {$\cdots$};
\node at ({(9)*\dx}, -\dy) {$\cdots$};
% 5. ADD final part 
\node[above] at (10*\dx, 0) {1};
\node[above] at (11*\dx, 0) {7};
\node[above] at (12*\dx, 0) {2};
\node[above] at (13*\dx, 0) {4};
\node[above] at (14*\dx, 0) {\textcolor{red}{6}};
\node[above] at (15*\dx, 0) {1};
\node[above] at (16*\dx, 0) {3};
\node[above] at (17*\dx, 0) {\textcolor{red}{2}};

\node[below] at (10*\dx, -\dy) {5};
\node[below] at (11*\dx, -\dy) {1};
\node[below] at (12*\dx, -\dy) {3};
\node[below] at (13*\dx, -\dy) {1};
\node[below] at (14*\dx, -\dy) {2};
\node[below] at (15*\dx, -\dy) {\textcolor{red}{8}};
\node[below] at (16*\dx, -\dy) {1};
\node[below] at (17*\dx, -\dy) {\textcolor{red}{4}};

\foreach \i in {10,...,17} {
 \pgfmathtruncatemacro{\b}{17-\i}
    \coordinate (U\i) at ({(\i)*\dx}, 0);
    \fill (\i*\dx, 0) circle (2pt);
    }

% DRAW edge-nodes and label them u_iu_{i+1}

\foreach \i in {10,...,16} {
    \pgfmathtruncatemacro{\j}{\i + 1}
    \coordinate (mid\i) at ({(\i)*\dx}, -\dy);
    \fill (mid\i) circle (2pt);
    % Connect edge-node to its endpoints
   \draw (mid\i) -- (\i*\dx, 0);
   \draw (mid\i) -- (\j*\dx, 0);
    % Connect u_i -- u_{i+1}
    \draw (\i*\dx, 0) -- (\j*\dx, 0);
    \draw (\i*\dx, -\dy) -- (\j*\dx, -\dy);}
\draw (17*\dx,0) -- (17*\dx,-\dy);
\draw (\dx,0) -- (17*\dx,-\dy);
 % Elipsa
\draw[black, thick]
  ([shift={(0,0)}] 0:2 and 2) 
  arc[start angle=180, end angle=0, x radius=8*\dx, y radius=2];
\draw[black, thick]
  ([shift={(16*\dx,-\dy)}] 0:2 and 2) 
  arc[start angle=0, end angle=-180, x radius=8*\dx, y radius=2];
\end{tikzpicture}

$\downarrow$ \vspace{0.1cm}

\begin{tikzpicture}[scale=0.4, font=\small]
% Parameters
\def\dx{2} % horizontal spacing
\def\dy{1.5} % vertical spacing for edge nodes
% 1. DRAW vertices and label them u_1, u_2, ..., u_n
\foreach \i in {1,...,8} {
    \fill (\i*\dx, 0) circle (2pt);}

\node[left] at (1*\dx, 0) {$u_{1}$};
  \node[above] at (1*\dx, 0) {8};
  \node[above] at (2*\dx, 0) {2};
  \node[above] at (3*\dx, 0) {1};
  \node[above] at (4*\dx, 0) {3};
  \node[above] at (5*\dx, 0) {1};
  \node[above] at (6*\dx, 0) {7};
  \node[above] at (7*\dx, 0) {2};
  \node[above] at (8*\dx, 0) {4};
  
% 2. DRAW edge-nodes and label them u_iu_{i+1}
\foreach \i in {1,...,7} {
    \pgfmathtruncatemacro{\j}{\i + 1}
    \coordinate (mid\i) at ({(\i)*\dx}, -\dy);
    \fill (mid\i) circle (2pt);
    % Connect edge-node to its endpoints
    \draw (mid\i) -- (\i*\dx, 0);
    \draw (mid\i) -- (\j*\dx, 0);
    % Connect u_i -- u_{i+1}
    \draw (\i*\dx, 0) -- (\j*\dx, 0);
}
% Se za u_8u_9
    \coordinate (mid8) at ({8*\dx}, -\dy);
    \fill (mid8) circle (2pt);
      \draw (mid8) -- (8*\dx, 0);

      \node[below] at (1*\dx, -\dy) {1};
\node[below] at (2*\dx, -\dy) {6};
\node[below] at (3*\dx, -\dy) {4};
\node[below] at (4*\dx, -\dy) {2};
\node[below] at (5*\dx, -\dy) {5};
\node[below] at (6*\dx, -\dy) {1};
\node[below] at (7*\dx, -\dy) {3};
\node[below] at (8*\dx, -\dy) {1};

% 3. CONNECT edge-nodes to each other if they share a vertex
% (i.e., u_iu_{i+1} -- u_{i+1}u_{i+2})
\foreach \i in {1,...,7} {
    \pgfmathtruncatemacro{\j}{\i + 1}
    \draw (mid\i) -- (mid\j);}
% 4. ADD DOTS for continuation
\node at ({(9)*\dx}, 0) {$\cdots$};
\node at ({(9)*\dx}, -\dy) {$\cdots$};
% 5. ADD final part 
\node[above] at (10*\dx, 0) {1};
\node[above] at (11*\dx, 0) {7};
\node[above] at (12*\dx, 0) {2};
\node[above] at (13*\dx, 0) {4};
\node[above] at (14*\dx, 0) {\textcolor{green}{9}};
\node[above] at (15*\dx, 0) {1};
\node[above] at (16*\dx, 0) {3};
\node[above] at (17*\dx, 0) {\textcolor{green}{5}};

\node[below] at (10*\dx, -\dy) {5};
\node[below] at (11*\dx, -\dy) {1};
\node[below] at (12*\dx, -\dy) {3};
\node[below] at (13*\dx, -\dy) {1};
\node[below] at (14*\dx, -\dy) {2};
\node[below] at (15*\dx, -\dy) {\textcolor{green}{10}};
\node[below] at (16*\dx, -\dy) {1};
\node[below] at (17*\dx, -\dy) {\textcolor{green}{11}};

\foreach \i in {10,...,17} {
 \pgfmathtruncatemacro{\b}{17-\i}
    \coordinate (U\i) at ({(\i)*\dx}, 0);
    \fill (\i*\dx, 0) circle (2pt);
    }
    
% DRAW edge-nodes and label them u_iu_{i+1}
\foreach \i in {10,...,16} {
    \pgfmathtruncatemacro{\j}{\i + 1}
    \coordinate (mid\i) at ({(\i)*\dx}, -\dy);
    \fill (mid\i) circle (2pt);
   \draw (mid\i) -- (\i*\dx, 0);
   \draw (mid\i) -- (\j*\dx, 0);
    % Connect u_i -- u_{i+1}
    \draw (\i*\dx, 0) -- (\j*\dx, 0);
    \draw (\i*\dx, -\dy) -- (\j*\dx, -\dy);}
\draw (17*\dx,0) -- (17*\dx,-\dy);
\draw (\dx,0) -- (17*\dx,-\dy);
 % Elipsa
\draw[black, thick]
  ([shift={(0,0)}] 0:2 and 2) 
  arc[start angle=180, end angle=0, x radius=8*\dx, y radius=2];
\draw[black, thick]
  ([shift={(16*\dx,-\dy)}] 0:2 and 2) 
  arc[start angle=0, end angle=-180, x radius=8*\dx, y radius=2];
\end{tikzpicture}
\end{center}
\caption{Improper and proper packing coloring of $T(C_n)$ - situation 1.}
\label{fig:situation1}
\end{figure}
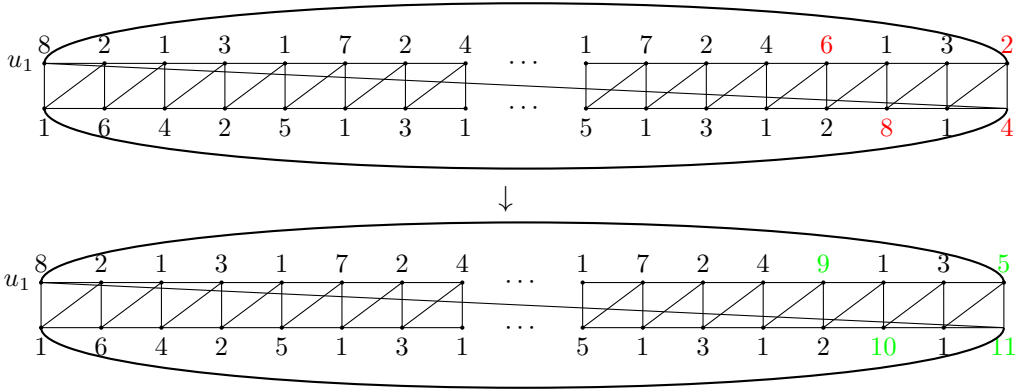
%%%%%%%%%%%%%%%%%%%%%%%%%%%%%%%%%%%%%%%%%%%%%%%%%%%%%% SITUATION 2
\begin{figure}[h]
\begin{center}

% ZAPOREDJE 1 -- PRVA VERZIJA
\begin{tikzpicture}[scale=0.4, font=\small]
\def\dx{2} % horizontal spacing
\def\dy{1.5} % vertical spacing for edge nodes

% 1. DRAW vertices and label them u_1, u_2, ..., u_n
\foreach \i in {1,...,8} {
    \fill (\i*\dx, 0) circle (2pt);
  }
%Barve zgoraj 
\node[left] at (1*\dx, 0) {$u_{1}$};
  \node[above] at (1*\dx, 0) {8};
  \node[above] at (2*\dx, 0) {2};
  \node[above] at (3*\dx, 0) {1};
  \node[above] at (4*\dx, 0) {3};
  \node[above] at (5*\dx, 0) {1};
  \node[above] at (6*\dx, 0) {7};
  \node[above] at (7*\dx, 0) {2};
  \node[above] at (8*\dx, 0) {4};

% 2. DRAW edge-nodes and label them u_iu_{i+1}
\foreach \i in {1,...,7} {
    \pgfmathtruncatemacro{\j}{\i + 1}
    \coordinate (mid\i) at ({(\i)*\dx}, -\dy);
    \fill (mid\i) circle (2pt);

    % Connect edge-node to its endpoints
    \draw (mid\i) -- (\i*\dx, 0);
    \draw (mid\i) -- (\j*\dx, 0);
    % Connect u_i -- u_{i+1}
    \draw (\i*\dx, 0) -- (\j*\dx, 0);}
% Se za u_8u_9
    \coordinate (mid8) at ({8*\dx}, -\dy);
    \fill (mid8) circle (2pt);
      \draw (mid8) -- (8*\dx, 0);

\node[below] at (1*\dx, -\dy) {1};
\node[below] at (2*\dx, -\dy) {6};
\node[below] at (3*\dx, -\dy) {4};
\node[below] at (4*\dx, -\dy) {2};
\node[below] at (5*\dx, -\dy) {5};
\node[below] at (6*\dx, -\dy) {1};
\node[below] at (7*\dx, -\dy) {3};
\node[below] at (8*\dx, -\dy) {1};

% 3. CONNECT edge-nodes to each other if they share a vertex
% (i.e., u_iu_{i+1} -- u_{i+1}u_{i+2})
\foreach \i in {1,...,7} {
    \pgfmathtruncatemacro{\j}{\i + 1}
    \draw (mid\i) -- (mid\j);}
% 4. ADD DOTS for continuation
\node at ({(9)*\dx}, 0) {$\cdots$};
\node at ({(9)*\dx}, -\dy) {$\cdots$};
% 5. ADD final part 
\node[above] at (10*\dx, 0) {1};
\node[above] at (11*\dx, 0) {5};
\node[above] at (12*\dx, 0) {2};
\node[above] at (13*\dx, 0) {1};
\node[above] at (14*\dx, 0) {\textcolor{red}{6}};
\node[above] at (15*\dx, 0) {2};
\node[above] at (16*\dx, 0) {1};
\node[above] at (17*\dx, 0) {\textcolor{red}{4}};

\node[below] at (10*\dx, -\dy) {\textcolor{red}{8}};
\node[below] at (11*\dx, -\dy) {1};
\node[below] at (12*\dx, -\dy) {4};
\node[below] at (13*\dx, -\dy) {3};
\node[below] at (14*\dx, -\dy) {1};
\node[below] at (15*\dx, -\dy) {7};
\node[below] at (16*\dx, -\dy) {5};
\node[below] at (17*\dx, -\dy) {\textcolor{red}{2}};

\foreach \i in {10,...,17} {
 \pgfmathtruncatemacro{\b}{17-\i}
    \coordinate (U\i) at ({(\i)*\dx}, 0);
    \fill (\i*\dx, 0) circle (2pt);
    }

% DRAW edge-nodes and label them u_iu_{i+1}

\foreach \i in {10,...,16} {
    \pgfmathtruncatemacro{\j}{\i + 1}
    \coordinate (mid\i) at ({(\i)*\dx}, -\dy);
    \fill (mid\i) circle (2pt);
    % Connect edge-node to its endpoints
   \draw (mid\i) -- (\i*\dx, 0);
   \draw (mid\i) -- (\j*\dx, 0);
    % Connect u_i -- u_{i+1}
    \draw (\i*\dx, 0) -- (\j*\dx, 0);
    \draw (\i*\dx, -\dy) -- (\j*\dx, -\dy);}
\draw (17*\dx,0) -- (17*\dx,-\dy);
\draw (\dx,0) -- (17*\dx,-\dy);
 % Elipsa
\draw[black, thick]
  ([shift={(0,0)}] 0:2 and 2) 
  arc[start angle=180, end angle=0, x radius=8*\dx, y radius=2];
\draw[black, thick]
  ([shift={(16*\dx,-\dy)}] 0:2 and 2) 
  arc[start angle=0, end angle=-180, x radius=8*\dx, y radius=2];
\end{tikzpicture}

$\downarrow$ \vspace{0.1cm}

\begin{tikzpicture}[scale=0.4, font=\small]
% Parameters
\def\dx{2} % horizontal spacing
\def\dy{1.5} % vertical spacing for edge nodes
% 1. DRAW vertices and label them u_1, u_2, ..., u_n
\foreach \i in {1,...,8} {
    \fill (\i*\dx, 0) circle (2pt);}

\node[left] at (1*\dx, 0) {$u_{1}$};
  \node[above] at (1*\dx, 0) {\textcolor{green}{2}};
  \node[above] at (2*\dx, 0) {\textcolor{green}{8}};
  \node[above] at (3*\dx, 0) {1};
  \node[above] at (4*\dx, 0) {3};
  \node[above] at (5*\dx, 0) {1};
  \node[above] at (6*\dx, 0) {7};
  \node[above] at (7*\dx, 0) {2};
  \node[above] at (8*\dx, 0) {4};
  
% 2. DRAW edge-nodes and label them u_iu_{i+1}
\foreach \i in {1,...,7} {
    \pgfmathtruncatemacro{\j}{\i + 1}
    \coordinate (mid\i) at ({(\i)*\dx}, -\dy);
    \fill (mid\i) circle (2pt);
    % Connect edge-node to its endpoints
    \draw (mid\i) -- (\i*\dx, 0);
    \draw (mid\i) -- (\j*\dx, 0);
    % Connect u_i -- u_{i+1}
    \draw (\i*\dx, 0) -- (\j*\dx, 0);
}
% Se za u_8u_9
    \coordinate (mid8) at ({8*\dx}, -\dy);
    \fill (mid8) circle (2pt);
      \draw (mid8) -- (8*\dx, 0);

      \node[below] at (1*\dx, -\dy) {1};
\node[below] at (2*\dx, -\dy) {6};
\node[below] at (3*\dx, -\dy) {4};
\node[below] at (4*\dx, -\dy) {2};
\node[below] at (5*\dx, -\dy) {5};
\node[below] at (6*\dx, -\dy) {1};
\node[below] at (7*\dx, -\dy) {3};
\node[below] at (8*\dx, -\dy) {1};

% 3. CONNECT edge-nodes to each other if they share a vertex
% (i.e., u_iu_{i+1} -- u_{i+1}u_{i+2})
\foreach \i in {1,...,7} {
    \pgfmathtruncatemacro{\j}{\i + 1}
    \draw (mid\i) -- (mid\j);}
% 4. ADD DOTS for continuation
\node at ({(9)*\dx}, 0) {$\cdots$};
\node at ({(9)*\dx}, -\dy) {$\cdots$};
% 5. ADD final part 
\node[above] at (10*\dx, 0) {1};
\node[above] at (11*\dx, 0) {5};
\node[above] at (12*\dx, 0) {2};
\node[above] at (13*\dx, 0) {1};
\node[above] at (14*\dx, 0) {\textcolor{green}{9}};
\node[above] at (15*\dx, 0) {2};
\node[above] at (16*\dx, 0) {1};
\node[above] at (17*\dx, 0) {\textcolor{green}{10}};

\node[below] at (10*\dx, -\dy) {8};
\node[below] at (11*\dx, -\dy) {1};
\node[below] at (12*\dx, -\dy) {4};
\node[below] at (13*\dx, -\dy) {3};
\node[below] at (14*\dx, -\dy) {1};
\node[below] at (15*\dx, -\dy) {7};
\node[below] at (16*\dx, -\dy) {5};
\node[below] at (17*\dx, -\dy) {\textcolor{green}{11}};

\foreach \i in {10,...,17} {
 \pgfmathtruncatemacro{\b}{17-\i}
    \coordinate (U\i) at ({(\i)*\dx}, 0);
    \fill (\i*\dx, 0) circle (2pt);
    }
    
% DRAW edge-nodes and label them u_iu_{i+1}
\foreach \i in {10,...,16} {
    \pgfmathtruncatemacro{\j}{\i + 1}
    \coordinate (mid\i) at ({(\i)*\dx}, -\dy);
    \fill (mid\i) circle (2pt);
   \draw (mid\i) -- (\i*\dx, 0);
   \draw (mid\i) -- (\j*\dx, 0);
    % Connect u_i -- u_{i+1}
    \draw (\i*\dx, 0) -- (\j*\dx, 0);
    \draw (\i*\dx, -\dy) -- (\j*\dx, -\dy);}
\draw (17*\dx,0) -- (17*\dx,-\dy);
\draw (\dx,0) -- (17*\dx,-\dy);
 % Elipsa
\draw[black, thick]
  ([shift={(0,0)}] 0:2 and 2) 
  arc[start angle=180, end angle=0, x radius=8*\dx, y radius=2];
\draw[black, thick]
  ([shift={(16*\dx,-\dy)}] 0:2 and 2) 
  arc[start angle=0, end angle=-180, x radius=8*\dx, y radius=2];
\end{tikzpicture}
\end{center}
\caption{Improper and proper packing coloring of $T(C_n)$ - situation 2.}
\label{fig:situation2}
\end{figure}
%%%%%%%%%%%%%%%%%%%%%%%%%%%%%%%%%%%%%%%%%%%%%%%%%%%%%%%%%%%%%%%%%%%%% SITUATION 3
\begin{figure}[h]
\begin{center}
% ZAPOREDJE 1 -- PRVA VERZIJA
\begin{tikzpicture}[scale=0.4, font=\small]
\def\dx{2} % horizontal spacing
\def\dy{1.5} % vertical spacing for edge nodes

% 1. DRAW vertices and label them u_1, u_2, ..., u_n
\foreach \i in {1,...,8} {
    \fill (\i*\dx, 0) circle (2pt);
  }
%Barve zgoraj 
\node[left] at (1*\dx, 0) {$u_{1}$};
  \node[above] at (1*\dx, 0) {8};
  \node[above] at (2*\dx, 0) {2};
  \node[above] at (3*\dx, 0) {1};
  \node[above] at (4*\dx, 0) {3};
  \node[above] at (5*\dx, 0) {1};
  \node[above] at (6*\dx, 0) {7};
  \node[above] at (7*\dx, 0) {2};
  \node[above] at (8*\dx, 0) {4};

% 2. DRAW edge-nodes and label them u_iu_{i+1}
\foreach \i in {1,...,7} {
    \pgfmathtruncatemacro{\j}{\i + 1}
    \coordinate (mid\i) at ({(\i)*\dx}, -\dy);
    \fill (mid\i) circle (2pt);

    % Connect edge-node to its endpoints
    \draw (mid\i) -- (\i*\dx, 0);
    \draw (mid\i) -- (\j*\dx, 0);
    % Connect u_i -- u_{i+1}
    \draw (\i*\dx, 0) -- (\j*\dx, 0);}
% Se za u_8u_9
    \coordinate (mid8) at ({8*\dx}, -\dy);
    \fill (mid8) circle (2pt);
      \draw (mid8) -- (8*\dx, 0);

\node[below] at (1*\dx, -\dy) {1};
\node[below] at (2*\dx, -\dy) {6};
\node[below] at (3*\dx, -\dy) {4};
\node[below] at (4*\dx, -\dy) {2};
\node[below] at (5*\dx, -\dy) {5};
\node[below] at (6*\dx, -\dy) {1};
\node[below] at (7*\dx, -\dy) {3};
\node[below] at (8*\dx, -\dy) {1};

% 3. CONNECT edge-nodes to each other if they share a vertex
% (i.e., u_iu_{i+1} -- u_{i+1}u_{i+2})
\foreach \i in {1,...,7} {
    \pgfmathtruncatemacro{\j}{\i + 1}
    \draw (mid\i) -- (mid\j);}
% 4. ADD DOTS for continuation
\node at ({(9)*\dx}, 0) {$\cdots$};
\node at ({(9)*\dx}, -\dy) {$\cdots$};
% 5. ADD final part 
\node[above] at (10*\dx, 0) {2};
\node[above] at (11*\dx, 0) {1};
\node[above] at (12*\dx, 0) {4};
\node[above] at (13*\dx, 0) {1};
\node[above] at (14*\dx, 0) {\textcolor{red}{8}};
\node[above] at (15*\dx, 0) {2};
\node[above] at (16*\dx, 0) {1};
\node[above] at (17*\dx, 0) {3};

\node[below] at (10*\dx, -\dy) {7};
\node[below] at (11*\dx, -\dy) {5};
\node[below] at (12*\dx, -\dy) {2};
\node[below] at (13*\dx, -\dy) {3};
\node[below] at (14*\dx, -\dy) {1};
\node[below] at (15*\dx, -\dy) {\textcolor{red}{6}};
\node[below] at (16*\dx, -\dy) {\textcolor{red}{4}};
\node[below] at (17*\dx, -\dy) {\textcolor{red}{2}};

\foreach \i in {10,...,17} {
 \pgfmathtruncatemacro{\b}{17-\i}
    \coordinate (U\i) at ({(\i)*\dx}, 0);
    \fill (\i*\dx, 0) circle (2pt);
    }

% DRAW edge-nodes and label them u_iu_{i+1}

\foreach \i in {10,...,16} {
    \pgfmathtruncatemacro{\j}{\i + 1}
    \coordinate (mid\i) at ({(\i)*\dx}, -\dy);
    \fill (mid\i) circle (2pt);
    % Connect edge-node to its endpoints
   \draw (mid\i) -- (\i*\dx, 0);
   \draw (mid\i) -- (\j*\dx, 0);
    % Connect u_i -- u_{i+1}
    \draw (\i*\dx, 0) -- (\j*\dx, 0);
    \draw (\i*\dx, -\dy) -- (\j*\dx, -\dy);}
\draw (17*\dx,0) -- (17*\dx,-\dy);
\draw (\dx,0) -- (17*\dx,-\dy);
 % Elipsa
\draw[black, thick]
  ([shift={(0,0)}] 0:2 and 2) 
  arc[start angle=180, end angle=0, x radius=8*\dx, y radius=2];
\draw[black, thick]
  ([shift={(16*\dx,-\dy)}] 0:2 and 2) 
  arc[start angle=0, end angle=-180, x radius=8*\dx, y radius=2];
\end{tikzpicture}

$\downarrow$ \vspace{0.1cm}

\begin{tikzpicture}[scale=0.4, font=\small]
% Parameters
\def\dx{2} % horizontal spacing
\def\dy{1.5} % vertical spacing for edge nodes
% 1. DRAW vertices and label them u_1, u_2, ..., u_n
\foreach \i in {1,...,8} {
    \fill (\i*\dx, 0) circle (2pt);}

\node[left] at (1*\dx, 0) {$u_{1}$};
  \node[above] at (1*\dx, 0) {8};
  \node[above] at (2*\dx, 0) {2};
  \node[above] at (3*\dx, 0) {1};
  \node[above] at (4*\dx, 0) {3};
  \node[above] at (5*\dx, 0) {1};
  \node[above] at (6*\dx, 0) {7};
  \node[above] at (7*\dx, 0) {2};
  \node[above] at (8*\dx, 0) {4};
  
% 2. DRAW edge-nodes and label them u_iu_{i+1}
\foreach \i in {1,...,7} {
    \pgfmathtruncatemacro{\j}{\i + 1}
    \coordinate (mid\i) at ({(\i)*\dx}, -\dy);
    \fill (mid\i) circle (2pt);
    % Connect edge-node to its endpoints
    \draw (mid\i) -- (\i*\dx, 0);
    \draw (mid\i) -- (\j*\dx, 0);
    % Connect u_i -- u_{i+1}
    \draw (\i*\dx, 0) -- (\j*\dx, 0);
}
% Se za u_8u_9
    \coordinate (mid8) at ({8*\dx}, -\dy);
    \fill (mid8) circle (2pt);
      \draw (mid8) -- (8*\dx, 0);

      \node[below] at (1*\dx, -\dy) {1};
\node[below] at (2*\dx, -\dy) {6};
\node[below] at (3*\dx, -\dy) {4};
\node[below] at (4*\dx, -\dy) {2};
\node[below] at (5*\dx, -\dy) {5};
\node[below] at (6*\dx, -\dy) {1};
\node[below] at (7*\dx, -\dy) {3};
\node[below] at (8*\dx, -\dy) {1};

% 3. CONNECT edge-nodes to each other if they share a vertex
% (i.e., u_iu_{i+1} -- u_{i+1}u_{i+2})
\foreach \i in {1,...,7} {
    \pgfmathtruncatemacro{\j}{\i + 1}
    \draw (mid\i) -- (mid\j);}
% 4. ADD DOTS for continuation
\node at ({(9)*\dx}, 0) {$\cdots$};
\node at ({(9)*\dx}, -\dy) {$\cdots$};
% 5. ADD final part 
\node[above] at (10*\dx, 0) {2};
\node[above] at (11*\dx, 0) {1};
\node[above] at (12*\dx, 0) {4};
\node[above] at (13*\dx, 0) {1};
\node[above] at (14*\dx, 0) {\textcolor{green}{9}};
\node[above] at (15*\dx, 0) {2};
\node[above] at (16*\dx, 0) {1};
\node[above] at (17*\dx, 0) {\textcolor{green}{5}};

\node[below] at (10*\dx, -\dy) {7};
\node[below] at (11*\dx, -\dy) {5};
\node[below] at (12*\dx, -\dy) {2};
\node[below] at (13*\dx, -\dy) {3};
\node[below] at (14*\dx, -\dy) {1};
\node[below] at (15*\dx, -\dy) {\textcolor{green}{10}};
\node[below] at (16*\dx, -\dy) {\textcolor{green}{11}};
\node[below] at (17*\dx, -\dy) {\textcolor{green}{3}};

\foreach \i in {10,...,17} {
 \pgfmathtruncatemacro{\b}{17-\i}
    \coordinate (U\i) at ({(\i)*\dx}, 0);
    \fill (\i*\dx, 0) circle (2pt);
    }
    
% DRAW edge-nodes and label them u_iu_{i+1}
\foreach \i in {10,...,16} {
    \pgfmathtruncatemacro{\j}{\i + 1}
    \coordinate (mid\i) at ({(\i)*\dx}, -\dy);
    \fill (mid\i) circle (2pt);
   \draw (mid\i) -- (\i*\dx, 0);
   \draw (mid\i) -- (\j*\dx, 0);
    % Connect u_i -- u_{i+1}
    \draw (\i*\dx, 0) -- (\j*\dx, 0);
    \draw (\i*\dx, -\dy) -- (\j*\dx, -\dy);}
\draw (17*\dx,0) -- (17*\dx,-\dy);
\draw (\dx,0) -- (17*\dx,-\dy);
 % Elipsa
\draw[black, thick]
  ([shift={(0,0)}] 0:2 and 2) 
  arc[start angle=180, end angle=0, x radius=8*\dx, y radius=2];
\draw[black, thick]
  ([shift={(16*\dx,-\dy)}] 0:2 and 2) 
  arc[start angle=0, end angle=-180, x radius=8*\dx, y radius=2];
\end{tikzpicture}
\end{center}
\caption{Improper and proper packing coloring of $T(C_n)$ - situation 3.}
\label{fig:situation3}
\end{figure}
%%%%%%%%%%%%%%%%%%%%%%%%%%%%%%%%%%%%%%%%%%%%%%%%%%%%%%%%% %SITUATION 4
\begin{figure}[h]
\begin{center}
% ZAPOREDJE 1 -- PRVA VERZIJA
\begin{tikzpicture}[scale=0.4, font=\small]
\def\dx{2} % horizontal spacing
\def\dy{1.5} % vertical spacing for edge nodes

% 1. DRAW vertices and label them u_1, u_2, ..., u_n
\foreach \i in {1,...,8} {
    \fill (\i*\dx, 0) circle (2pt);
  }
%Barve zgoraj 
\node[left] at (1*\dx, 0) {$u_{1}$};
  \node[above] at (1*\dx, 0) {8};
  \node[above] at (2*\dx, 0) {2};
  \node[above] at (3*\dx, 0) {1};
  \node[above] at (4*\dx, 0) {3};
  \node[above] at (5*\dx, 0) {1};
  \node[above] at (6*\dx, 0) {7};
  \node[above] at (7*\dx, 0) {2};
  \node[above] at (8*\dx, 0) {4};

% 2. DRAW edge-nodes and label them u_iu_{i+1}
\foreach \i in {1,...,7} {
    \pgfmathtruncatemacro{\j}{\i + 1}
    \coordinate (mid\i) at ({(\i)*\dx}, -\dy);
    \fill (mid\i) circle (2pt);

    % Connect edge-node to its endpoints
    \draw (mid\i) -- (\i*\dx, 0);
    \draw (mid\i) -- (\j*\dx, 0);
    % Connect u_i -- u_{i+1}
    \draw (\i*\dx, 0) -- (\j*\dx, 0);}
% Se za u_8u_9
    \coordinate (mid8) at ({8*\dx}, -\dy);
    \fill (mid8) circle (2pt);
      \draw (mid8) -- (8*\dx, 0);

\node[below] at (1*\dx, -\dy) {1};
\node[below] at (2*\dx, -\dy) {6};
\node[below] at (3*\dx, -\dy) {4};
\node[below] at (4*\dx, -\dy) {2};
\node[below] at (5*\dx, -\dy) {5};
\node[below] at (6*\dx, -\dy) {1};
\node[below] at (7*\dx, -\dy) {3};
\node[below] at (8*\dx, -\dy) {1};
%
%
% 3. CONNECT edge-nodes to each other if they share a vertex
% (i.e., u_iu_{i+1} -- u_{i+1}u_{i+2})
\foreach \i in {1,...,7} {
    \pgfmathtruncatemacro{\j}{\i + 1}
    \draw (mid\i) -- (mid\j);}
% 4. ADD DOTS for continuation
\node at ({(9)*\dx}, 0) {$\cdots$};
\node at ({(9)*\dx}, -\dy) {$\cdots$};
% 5. ADD final part 
\node[above] at (10*\dx, 0) {1};
\node[above] at (11*\dx, 0) {3};
\node[above] at (12*\dx, 0) {2};
\node[above] at (13*\dx, 0) {1};
\node[above] at (14*\dx, 0) {7};
\node[above] at (15*\dx, 0) {3};
\node[above] at (16*\dx, 0) {1};
\node[above] at (17*\dx, 0) {\textcolor{red}{4}};

\node[below] at (10*\dx, -\dy) {\textcolor{red}{8}};
\node[below] at (11*\dx, -\dy) {1};
\node[below] at (12*\dx, -\dy) {4};
\node[below] at (13*\dx, -\dy) {5};
\node[below] at (14*\dx, -\dy) {1};
\node[below] at (15*\dx, -\dy) {2};
\node[below] at (16*\dx, -\dy) {\textcolor{red}{6}};
\node[below] at (17*\dx, -\dy) {\textcolor{red}{1}};

\foreach \i in {10,...,17} {
 \pgfmathtruncatemacro{\b}{17-\i}
    \coordinate (U\i) at ({(\i)*\dx}, 0);
    \fill (\i*\dx, 0) circle (2pt);
    }

% DRAW edge-nodes and label them u_iu_{i+1}
%
%
\foreach \i in {10,...,16} {
    \pgfmathtruncatemacro{\j}{\i + 1}
    \coordinate (mid\i) at ({(\i)*\dx}, -\dy);
    \fill (mid\i) circle (2pt);
    % Connect edge-node to its endpoints
   \draw (mid\i) -- (\i*\dx, 0);
   \draw (mid\i) -- (\j*\dx, 0);
    % Connect u_i -- u_{i+1}
    \draw (\i*\dx, 0) -- (\j*\dx, 0);
    \draw (\i*\dx, -\dy) -- (\j*\dx, -\dy);}
\draw (17*\dx,0) -- (17*\dx,-\dy);
\draw (\dx,0) -- (17*\dx,-\dy);
 % Elipsa
\draw[black, thick]
  ([shift={(0,0)}] 0:2 and 2) 
  arc[start angle=180, end angle=0, x radius=8*\dx, y radius=2];
\draw[black, thick]
  ([shift={(16*\dx,-\dy)}] 0:2 and 2) 
  arc[start angle=0, end angle=-180, x radius=8*\dx, y radius=2];
\end{tikzpicture}

$\downarrow$ \vspace{0.1cm}

\begin{tikzpicture}[scale=0.4, font=\small]
% Parameters
\def\dx{2} % horizontal spacing
\def\dy{1.5} % vertical spacing for edge nodes
% 1. DRAW vertices and label them u_1, u_2, ..., u_n
\foreach \i in {1,...,8} {
    \fill (\i*\dx, 0) circle (2pt);}

\node[left] at (1*\dx, 0) {$u_{1}$};
  \node[above] at (1*\dx, 0) {\textcolor{green}{2}};
  \node[above] at (2*\dx, 0) {\textcolor{green}{8}};
  \node[above] at (3*\dx, 0) {1};
  \node[above] at (4*\dx, 0) {3};
  \node[above] at (5*\dx, 0) {1};
  \node[above] at (6*\dx, 0) {7};
  \node[above] at (7*\dx, 0) {2};
  \node[above] at (8*\dx, 0) {4};
  
% 2. DRAW edge-nodes and label them u_iu_{i+1}
\foreach \i in {1,...,7} {
    \pgfmathtruncatemacro{\j}{\i + 1}
    \coordinate (mid\i) at ({(\i)*\dx}, -\dy);
    \fill (mid\i) circle (2pt);
    % Connect edge-node to its endpoints
    \draw (mid\i) -- (\i*\dx, 0);
    \draw (mid\i) -- (\j*\dx, 0);
    % Connect u_i -- u_{i+1}
    \draw (\i*\dx, 0) -- (\j*\dx, 0);
}
% Se za u_8u_9
    \coordinate (mid8) at ({8*\dx}, -\dy);
    \fill (mid8) circle (2pt);
      \draw (mid8) -- (8*\dx, 0);

      \node[below] at (1*\dx, -\dy) {1};
\node[below] at (2*\dx, -\dy) {6};
\node[below] at (3*\dx, -\dy) {4};
\node[below] at (4*\dx, -\dy) {2};
\node[below] at (5*\dx, -\dy) {5};
\node[below] at (6*\dx, -\dy) {1};
\node[below] at (7*\dx, -\dy) {3};
\node[below] at (8*\dx, -\dy) {1};

% 3. CONNECT edge-nodes to each other if they share a vertex
% (i.e., u_iu_{i+1} -- u_{i+1}u_{i+2})
\foreach \i in {1,...,7} {
    \pgfmathtruncatemacro{\j}{\i + 1}
    \draw (mid\i) -- (mid\j);}
% 4. ADD DOTS for continuation
\node at ({(9)*\dx}, 0) {$\cdots$};
\node at ({(9)*\dx}, -\dy) {$\cdots$};
% 5. ADD final part 
\node[above] at (10*\dx, 0) {1};
\node[above] at (11*\dx, 0) {3};
\node[above] at (12*\dx, 0) {2};
\node[above] at (13*\dx, 0) {1};
\node[above] at (14*\dx, 0) {7};
\node[above] at (15*\dx, 0) {3};
\node[above] at (16*\dx, 0) {1};
\node[above] at (17*\dx, 0) {\textcolor{green}{9}};

\node[below] at (10*\dx, -\dy) {8};
\node[below] at (11*\dx, -\dy) {1};
\node[below] at (12*\dx, -\dy) {4};
\node[below] at (13*\dx, -\dy) {5};
\node[below] at (14*\dx, -\dy) {1};
\node[below] at (15*\dx, -\dy) {2};
\node[below] at (16*\dx, -\dy) {\textcolor{green}{10}};
\node[below] at (17*\dx, -\dy) {\textcolor{green}{11}};

\foreach \i in {10,...,17} {
 \pgfmathtruncatemacro{\b}{17-\i}
    \coordinate (U\i) at ({(\i)*\dx}, 0);
    \fill (\i*\dx, 0) circle (2pt);
    }
    
% DRAW edge-nodes and label them u_iu_{i+1}
\foreach \i in {10,...,16} {
    \pgfmathtruncatemacro{\j}{\i + 1}
    \coordinate (mid\i) at ({(\i)*\dx}, -\dy);
    \fill (mid\i) circle (2pt);
   \draw (mid\i) -- (\i*\dx, 0);
   \draw (mid\i) -- (\j*\dx, 0);
    % Connect u_i -- u_{i+1}
    \draw (\i*\dx, 0) -- (\j*\dx, 0);
    \draw (\i*\dx, -\dy) -- (\j*\dx, -\dy);}
\draw (17*\dx,0) -- (17*\dx,-\dy);
\draw (\dx,0) -- (17*\dx,-\dy);
 % Elipsa
\draw[black, thick]
  ([shift={(0,0)}] 0:2 and 2) 
  arc[start angle=180, end angle=0, x radius=8*\dx, y radius=2];
\draw[black, thick]
  ([shift={(16*\dx,-\dy)}] 0:2 and 2) 
  arc[start angle=0, end angle=-180, x radius=8*\dx, y radius=2];
\end{tikzpicture}
\end{center}
\caption{Improper and proper packing coloring of $T(C_n)$ - situation 4.}
\label{fig:situation4}
\end{figure}
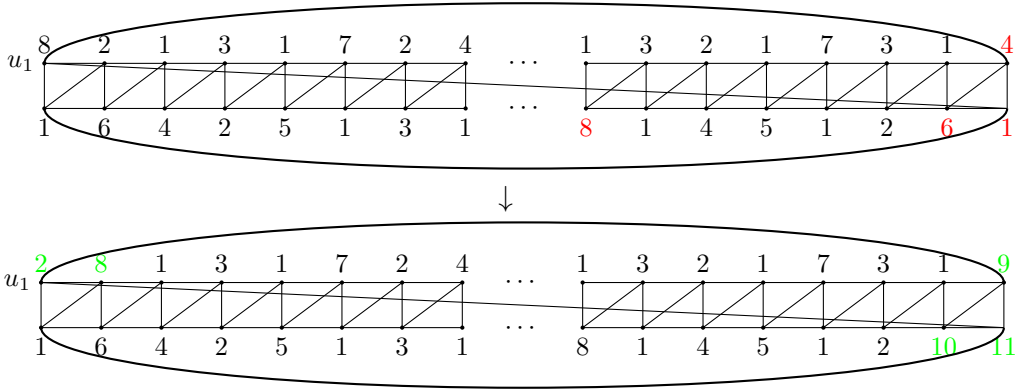
\textbf{Case 2.} Statement (E) holds. \\
Recall that the above presented color pattern for packing coloring of $D(1,2)$ includes number $7$ only three times. From the perspective of a color $7$, a packing coloring $c$ of $T(C_n)$ obtained by applying the mentioned color pattern, is problematic only in the cases when a vertex from $\{u_{n-1},u_{n-1}u_{n},u_n,u_nu_1\}$ receives a color $7$ by $c$. Consequently, we have six problematic situations when $c$ assigns a color $7$ to two distinct vertices of $T(C_n)$, which are at distance at most $7$. Considering the above mentioned color pattern, we observe that for three of the mentioned situations only the statements (E), (F) and either (A) or (B) hold. This means that only three vertices are problematic regarding the current version of a packing coloring of $T(C_n)$. Replacing the colors assigned to these three vertices with three additional colors, namely $9,10,11$, yields an $11$-packing coloring of $T(C_n)$. Hence, in these cases $\chi_\rho^{''}(C_n) \leq 11.$ However, three situations remain in which four problematic colors appear (see 
Fig.~\ref{fig:situation_barva7_1}, Fig.~\ref{fig:situation_barva7_2} and Fig.~\ref{fig:situation_barva7_3}; in each case a graph at the top of the figure). In each of these cases, the presented coloring can be modified to obtain a proper $11$-packing coloring of $T(C_n)$. Indeed, the bottom graphs in Fig.~\ref{fig:situation_barva7_1}, Fig.~\ref{fig:situation_barva7_2} and Fig.~\ref{fig:situation_barva7_3} demonstrate such corrected colorings. These colorings show that, also in these cases, $\chi_\rho^{''}(C_n) \leq 11$, which concludes the proof.

%%% SITUATION ZA BARVO 7 -- 1
%%%% SITUATION 3
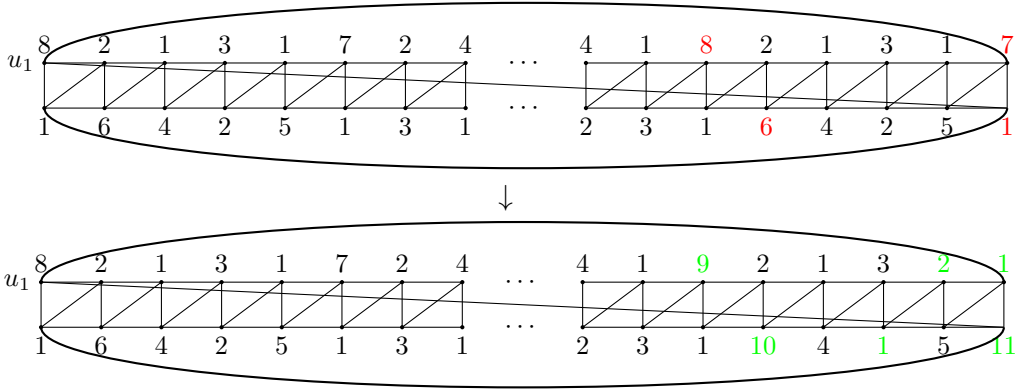
\begin{figure}[h]
\begin{center}
% ZAPOREDJE 1 -- PRVA VERZIJA
\begin{tikzpicture}[scale=0.4, font=\small]
\def\dx{2} % horizontal spacing
\def\dy{1.5} % vertical spacing for edge nodes

% 1. DRAW vertices and label them u_1, u_2, ..., u_n
\foreach \i in {1,...,8} {
    \fill (\i*\dx, 0) circle (2pt);
  }
%Barve zgoraj 
\node[left] at (1*\dx, 0) {$u_{1}$};
  \node[above] at (1*\dx, 0) {8};
  \node[above] at (2*\dx, 0) {2};
  \node[above] at (3*\dx, 0) {1};
  \node[above] at (4*\dx, 0) {3};
  \node[above] at (5*\dx, 0) {1};
  \node[above] at (6*\dx, 0) {7};
  \node[above] at (7*\dx, 0) {2};
  \node[above] at (8*\dx, 0) {4};

% 2. DRAW edge-nodes and label them u_iu_{i+1}
\foreach \i in {1,...,7} {
    \pgfmathtruncatemacro{\j}{\i + 1}
    \coordinate (mid\i) at ({(\i)*\dx}, -\dy);
    \fill (mid\i) circle (2pt);

    % Connect edge-node to its endpoints
    \draw (mid\i) -- (\i*\dx, 0);
    \draw (mid\i) -- (\j*\dx, 0);
    % Connect u_i -- u_{i+1}
    \draw (\i*\dx, 0) -- (\j*\dx, 0);}
% Se za u_8u_9
    \coordinate (mid8) at ({8*\dx}, -\dy);
    \fill (mid8) circle (2pt);
      \draw (mid8) -- (8*\dx, 0);

\node[below] at (1*\dx, -\dy) {1};
\node[below] at (2*\dx, -\dy) {6};
\node[below] at (3*\dx, -\dy) {4};
\node[below] at (4*\dx, -\dy) {2};
\node[below] at (5*\dx, -\dy) {5};
\node[below] at (6*\dx, -\dy) {1};
\node[below] at (7*\dx, -\dy) {3};
\node[below] at (8*\dx, -\dy) {1};

% 3. CONNECT edge-nodes to each other if they share a vertex
% (i.e., u_iu_{i+1} -- u_{i+1}u_{i+2})
\foreach \i in {1,...,7} {
    \pgfmathtruncatemacro{\j}{\i + 1}
    \draw (mid\i) -- (mid\j);}
% 4. ADD DOTS for continuation
\node at ({(9)*\dx}, 0) {$\cdots$};
\node at ({(9)*\dx}, -\dy) {$\cdots$};
% 5. ADD final part 
\node[above] at (10*\dx, 0) {4};
\node[above] at (11*\dx, 0) {1};
\node[above] at (12*\dx, 0) {\textcolor{red}{8}};
\node[above] at (13*\dx, 0) {2};
\node[above] at (14*\dx, 0) {1};
\node[above] at (15*\dx, 0) {3};
\node[above] at (16*\dx, 0) {1};
\node[above] at (17*\dx, 0) {\textcolor{red}{7}};

\node[below] at (10*\dx, -\dy) {2};
\node[below] at (11*\dx, -\dy) {3};
\node[below] at (12*\dx, -\dy) {1};
\node[below] at (13*\dx, -\dy) {\textcolor{red}{6}};
\node[below] at (14*\dx, -\dy) {4};
\node[below] at (15*\dx, -\dy) {2};
\node[below] at (16*\dx, -\dy) {5};
\node[below] at (17*\dx, -\dy) {\textcolor{red}{1}};

\foreach \i in {10,...,17} {
 \pgfmathtruncatemacro{\b}{17-\i}
    \coordinate (U\i) at ({(\i)*\dx}, 0);
    \fill (\i*\dx, 0) circle (2pt);
    }

% DRAW edge-nodes and label them u_iu_{i+1}

\foreach \i in {10,...,16} {
    \pgfmathtruncatemacro{\j}{\i + 1}
    \coordinate (mid\i) at ({(\i)*\dx}, -\dy);
    \fill (mid\i) circle (2pt);
    % Connect edge-node to its endpoints
   \draw (mid\i) -- (\i*\dx, 0);
   \draw (mid\i) -- (\j*\dx, 0);
    % Connect u_i -- u_{i+1}
    \draw (\i*\dx, 0) -- (\j*\dx, 0);
    \draw (\i*\dx, -\dy) -- (\j*\dx, -\dy);}
\draw (17*\dx,0) -- (17*\dx,-\dy);
\draw (\dx,0) -- (17*\dx,-\dy);
 % Elipsa
\draw[black, thick]
  ([shift={(0,0)}] 0:2 and 2) 
  arc[start angle=180, end angle=0, x radius=8*\dx, y radius=2];
\draw[black, thick]
  ([shift={(16*\dx,-\dy)}] 0:2 and 2) 
  arc[start angle=0, end angle=-180, x radius=8*\dx, y radius=2];
\end{tikzpicture}

$\downarrow$ \vspace{0.1cm}

\begin{tikzpicture}[scale=0.4, font=\small]
% Parameters
\def\dx{2} % horizontal spacing
\def\dy{1.5} % vertical spacing for edge nodes
% 1. DRAW vertices and label them u_1, u_2, ..., u_n
\foreach \i in {1,...,8} {
    \fill (\i*\dx, 0) circle (2pt);}

\node[left] at (1*\dx, 0) {$u_{1}$};
  \node[above] at (1*\dx, 0) {8};
  \node[above] at (2*\dx, 0) {2};
  \node[above] at (3*\dx, 0) {1};
  \node[above] at (4*\dx, 0) {3};
  \node[above] at (5*\dx, 0) {1};
  \node[above] at (6*\dx, 0) {7};
  \node[above] at (7*\dx, 0) {2};
  \node[above] at (8*\dx, 0) {4};
  
% 2. DRAW edge-nodes and label them u_iu_{i+1}
\foreach \i in {1,...,7} {
    \pgfmathtruncatemacro{\j}{\i + 1}
    \coordinate (mid\i) at ({(\i)*\dx}, -\dy);
    \fill (mid\i) circle (2pt);
    % Connect edge-node to its endpoints
    \draw (mid\i) -- (\i*\dx, 0);
    \draw (mid\i) -- (\j*\dx, 0);
    % Connect u_i -- u_{i+1}
    \draw (\i*\dx, 0) -- (\j*\dx, 0);
}
% Se za u_8u_9
    \coordinate (mid8) at ({8*\dx}, -\dy);
    \fill (mid8) circle (2pt);
      \draw (mid8) -- (8*\dx, 0);

\node[below] at (1*\dx, -\dy) {1};
\node[below] at (2*\dx, -\dy) {6};
\node[below] at (3*\dx, -\dy) {4};
\node[below] at (4*\dx, -\dy) {2};
\node[below] at (5*\dx, -\dy) {5};
\node[below] at (6*\dx, -\dy) {1};
\node[below] at (7*\dx, -\dy) {3};
\node[below] at (8*\dx, -\dy) {1};

% 3. CONNECT edge-nodes to each other if they share a vertex
% (i.e., u_iu_{i+1} -- u_{i+1}u_{i+2})
\foreach \i in {1,...,7} {
    \pgfmathtruncatemacro{\j}{\i + 1}
    \draw (mid\i) -- (mid\j);}
% 4. ADD DOTS for continuation
\node at ({(9)*\dx}, 0) {$\cdots$};
\node at ({(9)*\dx}, -\dy) {$\cdots$};
% 5. ADD final part 
\node[above] at (10*\dx, 0) {4};
\node[above] at (11*\dx, 0) {1};
\node[above] at (12*\dx, 0) {\textcolor{green}{9}};
\node[above] at (13*\dx, 0) {2};
\node[above] at (14*\dx, 0) {1};
\node[above] at (15*\dx, 0) {3};
\node[above] at (16*\dx, 0) {\textcolor{green}{2}};
\node[above] at (17*\dx, 0) {\textcolor{green}{1}};

\node[below] at (10*\dx, -\dy) {2};
\node[below] at (11*\dx, -\dy) {3};
\node[below] at (12*\dx, -\dy) {1};
\node[below] at (13*\dx, -\dy) {\textcolor{green}{10}};
\node[below] at (14*\dx, -\dy) {4};
\node[below] at (15*\dx, -\dy) {\textcolor{green}{1}};
\node[below] at (16*\dx, -\dy) {5};
\node[below] at (17*\dx, -\dy) {\textcolor{green}{11}};

\foreach \i in {10,...,17} {
 \pgfmathtruncatemacro{\b}{17-\i}
    \coordinate (U\i) at ({(\i)*\dx}, 0);
    \fill (\i*\dx, 0) circle (2pt);
    }
    
% DRAW edge-nodes and label them u_iu_{i+1}
\foreach \i in {10,...,16} {
    \pgfmathtruncatemacro{\j}{\i + 1}
    \coordinate (mid\i) at ({(\i)*\dx}, -\dy);
    \fill (mid\i) circle (2pt);
   \draw (mid\i) -- (\i*\dx, 0);
   \draw (mid\i) -- (\j*\dx, 0);
    % Connect u_i -- u_{i+1}
    \draw (\i*\dx, 0) -- (\j*\dx, 0);
    \draw (\i*\dx, -\dy) -- (\j*\dx, -\dy);}
\draw (17*\dx,0) -- (17*\dx,-\dy);
\draw (\dx,0) -- (17*\dx,-\dy);
 % Elipsa
\draw[black, thick]
  ([shift={(0,0)}] 0:2 and 2) 
  arc[start angle=180, end angle=0, x radius=8*\dx, y radius=2];
\draw[black, thick]
  ([shift={(16*\dx,-\dy)}] 0:2 and 2) 
  arc[start angle=0, end angle=-180, x radius=8*\dx, y radius=2];
\end{tikzpicture}
\end{center}
\caption{Improper and proper packing coloring of $T(C_n)$ - situation 5.}
\label{fig:situation_barva7_1}
\end{figure}
%%%% SITUATION za barvo 7
%
%%%%%%%%%%%%%%%%%%%%%%%%%%%%%%%%%%%%%%%%%%%%%%
\begin{figure}[h]
\begin{center}
% ZAPOREDJE 1 -- PRVA VERZIJA
\begin{tikzpicture}[scale=0.4, font=\small]
\def\dx{2} % horizontal spacing
\def\dy{1.5} % vertical spacing for edge nodes

% 1. DRAW vertices and label them u_1, u_2, ..., u_n
\foreach \i in {1,...,8} {
    \fill (\i*\dx, 0) circle (2pt);
  }
%Barve zgoraj 
\node[left] at (1*\dx, 0) {$u_{1}$};
  \node[above] at (1*\dx, 0) {8};
  \node[above] at (2*\dx, 0) {2};
  \node[above] at (3*\dx, 0) {1};
  \node[above] at (4*\dx, 0) {3};
  \node[above] at (5*\dx, 0) {1};
  \node[above] at (6*\dx, 0) {7};
  \node[above] at (7*\dx, 0) {2};
  \node[above] at (8*\dx, 0) {4};

% 2. DRAW edge-nodes and label them u_iu_{i+1}
\foreach \i in {1,...,7} {
    \pgfmathtruncatemacro{\j}{\i + 1}
    \coordinate (mid\i) at ({(\i)*\dx}, -\dy);
    \fill (mid\i) circle (2pt);

    % Connect edge-node to its endpoints
    \draw (mid\i) -- (\i*\dx, 0);
    \draw (mid\i) -- (\j*\dx, 0);
    % Connect u_i -- u_{i+1}
    \draw (\i*\dx, 0) -- (\j*\dx, 0);}
% Se za u_8u_9
    \coordinate (mid8) at ({8*\dx}, -\dy);
    \fill (mid8) circle (2pt);
      \draw (mid8) -- (8*\dx, 0);

\node[below] at (1*\dx, -\dy) {1};
\node[below] at (2*\dx, -\dy) {6};
\node[below] at (3*\dx, -\dy) {4};
\node[below] at (4*\dx, -\dy) {2};
\node[below] at (5*\dx, -\dy) {5};
\node[below] at (6*\dx, -\dy) {1};
\node[below] at (7*\dx, -\dy) {3};
\node[below] at (8*\dx, -\dy) {1};

% 3. CONNECT edge-nodes to each other if they share a vertex
% (i.e., u_iu_{i+1} -- u_{i+1}u_{i+2})
\foreach \i in {1,...,7} {
    \pgfmathtruncatemacro{\j}{\i + 1}
    \draw (mid\i) -- (mid\j);}
% 4. ADD DOTS for continuation
\node at ({(9)*\dx}, 0) {$\cdots$};
\node at ({(9)*\dx}, -\dy) {$\cdots$};
% 5. ADD final part 
\node[above] at (10*\dx, 0) {4};
\node[above] at (11*\dx, 0) {2};
\node[above] at (12*\dx, 0) {1};
\node[above] at (13*\dx, 0) {5};
\node[above] at (14*\dx, 0) {2};
\node[above] at (15*\dx, 0) {1};
\node[above] at (16*\dx, 0) {\textcolor{red}{6}};
\node[above] at (17*\dx, 0) {\textcolor{red}{2}};

\node[below] at (10*\dx, -\dy) {1};
\node[below] at (11*\dx, -\dy) {3};
\node[below] at (12*\dx, -\dy) {\textcolor{red}{8}};
\node[below] at (13*\dx, -\dy) {1};
\node[below] at (14*\dx, -\dy) {4};
\node[below] at (15*\dx, -\dy) {3};
\node[below] at (16*\dx, -\dy) {1};
\node[below] at (17*\dx, -\dy) {\textcolor{red}{7}};

\foreach \i in {10,...,17} {
 \pgfmathtruncatemacro{\b}{17-\i}
    \coordinate (U\i) at ({(\i)*\dx}, 0);
    \fill (\i*\dx, 0) circle (2pt);
    }

% DRAW edge-nodes and label them u_iu_{i+1}

\foreach \i in {10,...,16} {
    \pgfmathtruncatemacro{\j}{\i + 1}
    \coordinate (mid\i) at ({(\i)*\dx}, -\dy);
    \fill (mid\i) circle (2pt);
    % Connect edge-node to its endpoints
   \draw (mid\i) -- (\i*\dx, 0);
   \draw (mid\i) -- (\j*\dx, 0);
    % Connect u_i -- u_{i+1}
    \draw (\i*\dx, 0) -- (\j*\dx, 0);
    \draw (\i*\dx, -\dy) -- (\j*\dx, -\dy);}
\draw (17*\dx,0) -- (17*\dx,-\dy);
\draw (\dx,0) -- (17*\dx,-\dy);
 % Elipsa
\draw[black, thick]
  ([shift={(0,0)}] 0:2 and 2) 
  arc[start angle=180, end angle=0, x radius=8*\dx, y radius=2];
\draw[black, thick]
  ([shift={(16*\dx,-\dy)}] 0:2 and 2) 
  arc[start angle=0, end angle=-180, x radius=8*\dx, y radius=2];
\end{tikzpicture}

$\downarrow$ \vspace{0.1cm}

\begin{tikzpicture}[scale=0.4, font=\small]
% Parameters
\def\dx{2} % horizontal spacing
\def\dy{1.5} % vertical spacing for edge nodes
% 1. DRAW vertices and label them u_1, u_2, ..., u_n
\foreach \i in {1,...,8} {
    \fill (\i*\dx, 0) circle (2pt);}

\node[left] at (1*\dx, 0) {$u_{1}$};
  \node[above] at (1*\dx, 0) {8};
  \node[above] at (2*\dx, 0) {2};
  \node[above] at (3*\dx, 0) {1};
  \node[above] at (4*\dx, 0) {3};
  \node[above] at (5*\dx, 0) {1};
  \node[above] at (6*\dx, 0) {7};
  \node[above] at (7*\dx, 0) {2};
  \node[above] at (8*\dx, 0) {4};
  
% 2. DRAW edge-nodes and label them u_iu_{i+1}
\foreach \i in {1,...,7} {
    \pgfmathtruncatemacro{\j}{\i + 1}
    \coordinate (mid\i) at ({(\i)*\dx}, -\dy);
    \fill (mid\i) circle (2pt);
    % Connect edge-node to its endpoints
    \draw (mid\i) -- (\i*\dx, 0);
    \draw (mid\i) -- (\j*\dx, 0);
    % Connect u_i -- u_{i+1}
    \draw (\i*\dx, 0) -- (\j*\dx, 0);
}
% Se za u_8u_9
    \coordinate (mid8) at ({8*\dx}, -\dy);
    \fill (mid8) circle (2pt);
      \draw (mid8) -- (8*\dx, 0);

      \node[below] at (1*\dx, -\dy) {1};
\node[below] at (2*\dx, -\dy) {6};
\node[below] at (3*\dx, -\dy) {4};
\node[below] at (4*\dx, -\dy) {2};
\node[below] at (5*\dx, -\dy) {5};
\node[below] at (6*\dx, -\dy) {1};
\node[below] at (7*\dx, -\dy) {3};
\node[below] at (8*\dx, -\dy) {1};

% 3. CONNECT edge-nodes to each other if they share a vertex
% (i.e., u_iu_{i+1} -- u_{i+1}u_{i+2})
\foreach \i in {1,...,7} {
    \pgfmathtruncatemacro{\j}{\i + 1}
    \draw (mid\i) -- (mid\j);}
% 4. ADD DOTS for continuation
\node at ({(9)*\dx}, 0) {$\cdots$};
\node at ({(9)*\dx}, -\dy) {$\cdots$};
% 5. ADD final part 
\node[above] at (10*\dx, 0) {4};
\node[above] at (11*\dx, 0) {2};
\node[above] at (12*\dx, 0) {1};
\node[above] at (13*\dx, 0) {5};
\node[above] at (14*\dx, 0) {2};
\node[above] at (15*\dx, 0) {1};
\node[above] at (16*\dx, 0) {\textcolor{green}{9}};
\node[above] at (17*\dx, 0) {\textcolor{green}{1}};

\node[below] at (10*\dx, -\dy) {1};
\node[below] at (11*\dx, -\dy) {3};
\node[below] at (12*\dx, -\dy) {\textcolor{green}{10}};
\node[below] at (13*\dx, -\dy) {1};
\node[below] at (14*\dx, -\dy) {4};
\node[below] at (15*\dx, -\dy) {3};
\node[below] at (16*\dx, -\dy) {\textcolor{green}{2}};
\node[below] at (17*\dx, -\dy) {\textcolor{green}{11}};

\foreach \i in {10,...,17} {
 \pgfmathtruncatemacro{\b}{17-\i}
    \coordinate (U\i) at ({(\i)*\dx}, 0);
    \fill (\i*\dx, 0) circle (2pt);
    }
    
% DRAW edge-nodes and label them u_iu_{i+1}
\foreach \i in {10,...,16} {
    \pgfmathtruncatemacro{\j}{\i + 1}
    \coordinate (mid\i) at ({(\i)*\dx}, -\dy);
    \fill (mid\i) circle (2pt);
   \draw (mid\i) -- (\i*\dx, 0);
   \draw (mid\i) -- (\j*\dx, 0);
    % Connect u_i -- u_{i+1}
    \draw (\i*\dx, 0) -- (\j*\dx, 0);
    \draw (\i*\dx, -\dy) -- (\j*\dx, -\dy);}
\draw (17*\dx,0) -- (17*\dx,-\dy);
\draw (\dx,0) -- (17*\dx,-\dy);
 % Elipsa
\draw[black, thick]
  ([shift={(0,0)}] 0:2 and 2) 
  arc[start angle=180, end angle=0, x radius=8*\dx, y radius=2];
\draw[black, thick]
  ([shift={(16*\dx,-\dy)}] 0:2 and 2) 
  arc[start angle=0, end angle=-180, x radius=8*\dx, y radius=2];
\end{tikzpicture}
\end{center}
\caption{Improper and proper packing coloring of $T(C_n)$ - situation 6.}
\label{fig:situation_barva7_2}
\end{figure}
%%%%%%%%%%%%%%%%%%%%%%%%%%%%%%%%%%%%%%%%%%%%%%%%%%%%%%%%% %SITUATION 3 za barvo 7
\begin{figure}[h]
\begin{center}
% ZAPOREDJE 1 -- PRVA VERZIJA
\begin{tikzpicture}[scale=0.4, font=\small]
\def\dx{2} % horizontal spacing
\def\dy{1.5} % vertical spacing for edge nodes

% 1. DRAW vertices and label them u_1, u_2, ..., u_n
\foreach \i in {1,...,8} {
    \fill (\i*\dx, 0) circle (2pt);
  }
%Barve zgoraj 
\node[left] at (1*\dx, 0) {$u_{1}$};
  \node[above] at (1*\dx, 0) {8};
  \node[above] at (2*\dx, 0) {2};
  \node[above] at (3*\dx, 0) {1};
  \node[above] at (4*\dx, 0) {3};
  \node[above] at (5*\dx, 0) {1};
  \node[above] at (6*\dx, 0) {7};
  \node[above] at (7*\dx, 0) {2};
  \node[above] at (8*\dx, 0) {4};

% 2. DRAW edge-nodes and label them u_iu_{i+1}
\foreach \i in {1,...,7} {
    \pgfmathtruncatemacro{\j}{\i + 1}
    \coordinate (mid\i) at ({(\i)*\dx}, -\dy);
    \fill (mid\i) circle (2pt);

    % Connect edge-node to its endpoints
    \draw (mid\i) -- (\i*\dx, 0);
    \draw (mid\i) -- (\j*\dx, 0);
    % Connect u_i -- u_{i+1}
    \draw (\i*\dx, 0) -- (\j*\dx, 0);}
% Se za u_8u_9
    \coordinate (mid8) at ({8*\dx}, -\dy);
    \fill (mid8) circle (2pt);
      \draw (mid8) -- (8*\dx, 0);

\node[below] at (1*\dx, -\dy) {1};
\node[below] at (2*\dx, -\dy) {6};
\node[below] at (3*\dx, -\dy) {4};
\node[below] at (4*\dx, -\dy) {2};
\node[below] at (5*\dx, -\dy) {5};
\node[below] at (6*\dx, -\dy) {1};
\node[below] at (7*\dx, -\dy) {3};
\node[below] at (8*\dx, -\dy) {1};
%
%
% 3. CONNECT edge-nodes to each other if they share a vertex
% (i.e., u_iu_{i+1} -- u_{i+1}u_{i+2})
\foreach \i in {1,...,7} {
    \pgfmathtruncatemacro{\j}{\i + 1}
    \draw (mid\i) -- (mid\j);}
% 4. ADD DOTS for continuation
\node at ({(9)*\dx}, 0) {$\cdots$};
\node at ({(9)*\dx}, -\dy) {$\cdots$};
% 5. ADD final part 
\node[above] at (10*\dx, 0) {2};
\node[above] at (11*\dx, 0) {1};
\node[above] at (12*\dx, 0) {5};
\node[above] at (13*\dx, 0) {2};
\node[above] at (14*\dx, 0) {1};
\node[above] at (15*\dx, 0) {\textcolor{red}{6}};
\node[above] at (16*\dx, 0) {2};
\node[above] at (17*\dx, 0) {1};

\node[below] at (10*\dx, -\dy) {3};
\node[below] at (11*\dx, -\dy) {\textcolor{red}{8}};
\node[below] at (12*\dx, -\dy) {1};
\node[below] at (13*\dx, -\dy) {4};
\node[below] at (14*\dx, -\dy) {3};
\node[below] at (15*\dx, -\dy) {1};
\node[below] at (16*\dx, -\dy) {\textcolor{red}{7}};
\node[below] at (17*\dx, -\dy) {\textcolor{red}{5}};

\foreach \i in {10,...,17} {
 \pgfmathtruncatemacro{\b}{17-\i}
    \coordinate (U\i) at ({(\i)*\dx}, 0);
    \fill (\i*\dx, 0) circle (2pt);
    }

% DRAW edge-nodes and label them u_iu_{i+1}
%
%
\foreach \i in {10,...,16} {
    \pgfmathtruncatemacro{\j}{\i + 1}
    \coordinate (mid\i) at ({(\i)*\dx}, -\dy);
    \fill (mid\i) circle (2pt);
    % Connect edge-node to its endpoints
   \draw (mid\i) -- (\i*\dx, 0);
   \draw (mid\i) -- (\j*\dx, 0);
    % Connect u_i -- u_{i+1}
    \draw (\i*\dx, 0) -- (\j*\dx, 0);
    \draw (\i*\dx, -\dy) -- (\j*\dx, -\dy);}
\draw (17*\dx,0) -- (17*\dx,-\dy);
\draw (\dx,0) -- (17*\dx,-\dy);
 % Elipsa
\draw[black, thick]
  ([shift={(0,0)}] 0:2 and 2) 
  arc[start angle=180, end angle=0, x radius=8*\dx, y radius=2];
\draw[black, thick]
  ([shift={(16*\dx,-\dy)}] 0:2 and 2) 
  arc[start angle=0, end angle=-180, x radius=8*\dx, y radius=2];
\end{tikzpicture}

$\downarrow$ \vspace{0.1cm}

\begin{tikzpicture}[scale=0.4, font=\small]
% Parameters
\def\dx{2} % horizontal spacing
\def\dy{1.5} % vertical spacing for edge nodes
% 1. DRAW vertices and label them u_1, u_2, ..., u_n
\foreach \i in {1,...,8} {
    \fill (\i*\dx, 0) circle (2pt);}

\node[left] at (1*\dx, 0) {$u_{1}$};
  \node[above] at (1*\dx, 0) {8};
  \node[above] at (2*\dx, 0) {2};
  \node[above] at (3*\dx, 0) {1};
  \node[above] at (4*\dx, 0) {3};
  \node[above] at (5*\dx, 0) {1};
  \node[above] at (6*\dx, 0) {7};
  \node[above] at (7*\dx, 0) {2};
  \node[above] at (8*\dx, 0) {4};
  
% 2. DRAW edge-nodes and label them u_iu_{i+1}
\foreach \i in {1,...,7} {
    \pgfmathtruncatemacro{\j}{\i + 1}
    \coordinate (mid\i) at ({(\i)*\dx}, -\dy);
    \fill (mid\i) circle (2pt);
    % Connect edge-node to its endpoints
    \draw (mid\i) -- (\i*\dx, 0);
    \draw (mid\i) -- (\j*\dx, 0);
    % Connect u_i -- u_{i+1}
    \draw (\i*\dx, 0) -- (\j*\dx, 0);
}
% Se za u_8u_9
    \coordinate (mid8) at ({8*\dx}, -\dy);
    \fill (mid8) circle (2pt);
      \draw (mid8) -- (8*\dx, 0);

      \node[below] at (1*\dx, -\dy) {1};
\node[below] at (2*\dx, -\dy) {6};
\node[below] at (3*\dx, -\dy) {4};
\node[below] at (4*\dx, -\dy) {2};
\node[below] at (5*\dx, -\dy) {5};
\node[below] at (6*\dx, -\dy) {1};
\node[below] at (7*\dx, -\dy) {3};
\node[below] at (8*\dx, -\dy) {1};

% 3. CONNECT edge-nodes to each other if they share a vertex
% (i.e., u_iu_{i+1} -- u_{i+1}u_{i+2})
\foreach \i in {1,...,7} {
    \pgfmathtruncatemacro{\j}{\i + 1}
    \draw (mid\i) -- (mid\j);}
% 4. ADD DOTS for continuation
\node at ({(9)*\dx}, 0) {$\cdots$};
\node at ({(9)*\dx}, -\dy) {$\cdots$};
% 5. ADD final part 
\node[above] at (10*\dx, 0) {2};
\node[above] at (11*\dx, 0) {1};
\node[above] at (12*\dx, 0) {5};
\node[above] at (13*\dx, 0) {2};
\node[above] at (14*\dx, 0) {1};
\node[above] at (15*\dx, 0) {\textcolor{green}{9}};
\node[above] at (16*\dx, 0) {2};
\node[above] at (17*\dx, 0) {1};

\node[below] at (10*\dx, -\dy) {3};
\node[below] at (11*\dx, -\dy) {\textcolor{green}{10}};
\node[below] at (12*\dx, -\dy) {1};
\node[below] at (13*\dx, -\dy) {4};
\node[below] at (14*\dx, -\dy) {\textcolor{green}{7}};
\node[below] at (15*\dx, -\dy) {1};
\node[below] at (16*\dx, -\dy) {\textcolor{green}{3}};
\node[below] at (17*\dx, -\dy) {\textcolor{green}{11}};

\foreach \i in {10,...,17} {
 \pgfmathtruncatemacro{\b}{17-\i}
    \coordinate (U\i) at ({(\i)*\dx}, 0);
    \fill (\i*\dx, 0) circle (2pt);
    }
    
% DRAW edge-nodes and label them u_iu_{i+1}
\foreach \i in {10,...,16} {
    \pgfmathtruncatemacro{\j}{\i + 1}
    \coordinate (mid\i) at ({(\i)*\dx}, -\dy);
    \fill (mid\i) circle (2pt);
   \draw (mid\i) -- (\i*\dx, 0);
   \draw (mid\i) -- (\j*\dx, 0);
    % Connect u_i -- u_{i+1}
    \draw (\i*\dx, 0) -- (\j*\dx, 0);
    \draw (\i*\dx, -\dy) -- (\j*\dx, -\dy);}
\draw (17*\dx,0) -- (17*\dx,-\dy);
\draw (\dx,0) -- (17*\dx,-\dy);
 % Elipsa
\draw[black, thick]
  ([shift={(0,0)}] 0:2 and 2) 
  arc[start angle=180, end angle=0, x radius=8*\dx, y radius=2];
\draw[black, thick]
  ([shift={(16*\dx,-\dy)}] 0:2 and 2) 
  arc[start angle=0, end angle=-180, x radius=8*\dx, y radius=2];
\end{tikzpicture}
\end{center}
\caption{Improper and proper packing coloring of $T(C_n)$ - situation 7.}
\label{fig:situation_barva7_3}
\end{figure}
    \qed
\end{proof}

Recall that using a computer we found out that for any $n \geq 14$, $\chi_{\rho}^{''}(P_n) =8$. Since for any $n \geq 14$, a path $P_n$ is a subgraph of $C_n$, we can obtain the following result. 

\begin{remark}
\label{remark:cikli_racunalnik}
For any $n \geq 14$, $\chi_{\rho}^{''}(C_n) \geq 8$.
\end{remark}

With the previous theorem, we have shown that the packing total chromatic numbers of cycles $C_n$ are bounded from above by $11$. However, since for any $n$, $C_n$ is not a subgraph of $C_{n+1}$, the sequence $\chi_\rho^{''}(C_n), \chi_\rho^{''}(C_{n+1})$, $\chi_\rho^{''}(C_{n+2}), \ldots$ is not necessarily non-decreasing. In the following remark, we observe that there are infinitely many cycles with packing total chromatic number equal to $8$, which is particularly interesting since $\chi_\rho^{''}(C_n)>8$ even for small cycles such is $C_7$. 

\begin{remark}
\label{opomba_cikli=8}
    There exist an infinite number of cycles $C_n$, $n \geq 27$, with $\chi_\rho^{''}(C_n) \leq 8$. Indeed, in the cases when $n$ is a multiple of $27$, applying the above presented color pattern for a packing coloring of $T(C_n)$ directly yields a proper packing coloring, which uses only $8$ colors. Additionally, for very large cycles $C_n$, the fact that $\chi_\rho(D(1,2))=8$ implies $\chi_\rho^{''}(C_n) \geq 8$. 
\end{remark}

\section{Connected graphs with small packing total chromatic numbers}

In this section, we characterize all connected graphs $G$ with $\chi_{\rho}^{''}(G) \in \{1,2,3,4,5\}$. Note that we focus only on connected graphs, since in the case when $G$ is not connected, its packing total chromatic number is equal to the maximum value of packing total chromatic numbers of its connected components. 

The only connected graph $G$ with $\chi_{\rho}^{''}(G) = 1$ is a connected graph without edges, namely $K_1$. If a connected graph contains at least two vertices, it contains also at least one edge, which implies that $\chi_{\rho}^{''}(G) \geq 3$. Therefore, there do not exist graphs with packing total chromatic numbers equal to $2$. We also observe that $K_2$ is the only connected graph with the packing total chromatic number equals $3$. Indeed, a connected graph $G$ with at least three vertices contains a subgraph isomorphic to $P_3$ and then by Prop.~\ref{lema_spodnja_meja}, we have $\chi_{\rho}^{''}(G) \geq 4$. Therefore, there are only two connected graphs, $K_1$ and $K_2$, with the packing total chromatic number at most $3$. 

Now, consider the graphs $G$ with $\chi_{\rho}^{''}(G)=4$. Clearly, any such graph $G$ has at least one edge and hence by Prop.~\ref{lema_spodnja_meja}, we know that $\Delta(G) \leq 2.$ Additionally, the previous consideration implies that $\Delta(G) = 2$, which means that $G$ is either a path or a cycle. Then, Theorem~\ref{izrek:poti} and Theorem~\ref{izrek:cikli} imply that the only graph $G$ with $\chi_{\rho}^{''}(G)=4$ is $P_3$. 

Further, the following Proposition shows that there are only four graphs $G$ with $\chi_{\rho}^{''}(G)=5$.

\begin{proposition}
    For any connected graph $G$ holds the following: $\chi_{\rho}^{''}(G)=5$ if and only if $G$ is isomorphic to a graph $P_4, P_5, C_3$ or $K_{1,3}$.
\end{proposition}

\begin{proof}
Let $G$ be a graph isomorphic to a graph from $\{P_4, P_5, C_3, K_{1,3}\}$. Then, by Theorems~\ref{izrek:poti} and~\ref{izrek:cikli}, and the fact that  $\chi_{\rho}^{''}(K_{1,n})=n+2$ for any $n$, we conclude that $\chi_{\rho}^{''}(G)=5$.

Now, suppose that $G$ is a connected graph with $\chi_{\rho}^{''}(G)=5$. In this case, Prop.~\ref{lema_spodnja_meja} yields that $\Delta(G)\leq 3$. Clearly, $\Delta(G) \neq 1$. If $\Delta(G)=2$, then $G$ is a path or a cycle. Moreover, by Theorems~\ref{izrek:poti} and~\ref{izrek:cikli} we know that $G$ is isomorphic to $P_4$, $P_5$ or $C_3$. 

Next, let $\Delta(G) =3$. Denote by $u$ a vertex of $G$ with $deg(u)=3$ and let $N(u)=\{a,b,c\}$. Further, denote by $H$ a subgraph of $G$, which is isomorphic to $K_{1,3}$ and has a vertex set $V(H)=\{u,a,b,c\}$. Note that it is possible that $G$ is isomorphic to $H$ (it is a star $K_{1,3}$), since $\chi_{\rho}^{''}(K_{1,3})=5$. Hence, we can now focus on the case when $G$ is not isomorphic to $H$ and consider any optimal packing total coloring $c$ of $G$. Since $\chi_{\rho}^{''}(H)=5$ and any two elements from $X=V(H) \cup E(H)$ are pairwise at distance at most $2$, for $c$ holds the following: $|c^{-1} (i) \cap X| =1$ for any $i \in \{2,3,4,5\}$ and consequently, $|c^{-1} (1) \cap X| =3$. This implies that either $c(a)=c(b)=c(c)=1$, $c(ua)=c(b)=c(c)=1$, $c(a)=c(ub)=c(c)=1$ or $c(a)=c(b)=c(uc)=1$.  

In the first case there exists $d \in V(G) \setminus V(H)$ which is adjacent to at least one vertices from $\{a,b,c\}$. Without loss of generality we may assume that $ad \in E(G)$. Clearly, $c(ad) \neq 1$ and $c(d) \neq 1$. Further, since the distance between an element from $\{d, ad\}$ and an element from $X$ is at most $3$, we conclude that $c(d)=c(ad)=2$, a contradiction to $c$ being a proper packing total coloring of $G$. 

Now, consider the case when $c(ua)=c(b)=c(c)=1$. It is easy to observe that there $ab, ac, bc \notin E(G)$, since otherwise $c$ cannot be a proper packing total coloring of $G$. Therefore, also in this case there exists $d \in V(G) \setminus V(H)$ which is adjacent to at least one vertices from $\{a,b,c\}$. 
If $ad \in E(G)$, then $c(ad) \neq 1$ and  since the distance between $ad$ and an element from $X \setminus \{b,c\}$ (note that $c(b)=c(c)=1$), is at most $2$, we conclude that $c$ cannot be a proper packing total coloring of $G$. 
If $bd \in E(G)$ (or $cd \in E(G)$), then $c(bd) \neq 1$ and $c(d) \neq 1$. Moreover, since the distance between an element from $\{d, bd\}$ and an element from $X$ is at most $3$, we conclude that $c(d)=c(bd)=2$, again a contradiction. Note that the consideration of the cases when  $c(a)=c(ub)=c(c)=1$ or $c(a)=c(b)=c(uc)=1$ is symmetric to the case when $c(ua)=c(b)=c(c)=1$, hence the proof is done.

\qed
\end{proof}

\section{Open questions and problems}

The introduction of a new concept and the presentation of initial results naturally give rise to several open questions and problems. 

From the definition of a packing total coloring it follows that $\chi_\rho(G) \leq \chi_\rho^{''}(G)$ for any graph $G$. As we have shown, there exist graphs for which the difference between these two invariants can be arbitrarily large. On the other hand, it is not known whether there exists a connected graph $G$ with at least one edge such that $\chi_\rho(G) = \chi_\rho^{''}(G)$. Although at first glance this may seem impossible, we have been unable to prove it. In fact, if such a graph exists, then $\chi_\rho(G) = \chi_\rho^{''}(G) \geq 6$. The absence of a general characterization of graphs $G$ for which $\chi_\rho(G) \geq 5$ does not provide a direct path to a solution.

In this paper, we characterized graphs with small packing total chromatic numbers, namely graphs $G$ satisfying $\chi_\rho^{''}(G) \in \{1,2,3,4,5\}$. It would be interesting to extend these results by providing a characterization of graphs with packing total chromatic number $6$ or more. Moreover, it would be natural to determine the smallest integer $k$ for which the class of graphs $G$ with $\chi_\rho^{''}(G) = k$ is infinite.

Next, we determined the packing total chromatic numbers (or their bounds) only for two very well-known families of graphs: paths and cycles. The study of packing total colorings for other classical graph families remains an open problem. 

We particularly emphasize the problem of determining the exact values of the packing total chromatic numbers for cycles $C_n$, where $n \geq 14$, with respect to the divisibility of $n$. Regarding upper bounds on $\chi_\rho^{''}(C_n)$, recall that Remark~\ref{opomba_cikli=8} says that when $n$ is a multiple of $27$, $\chi_\rho^{''}(C_n) \leq 8$. Furthermore, from the proof of Theorem~\ref{izrek:cikli}, more precisely, from the presented packing total colorings of the cycles $C_{11}$, $C_{12}$ and $C_{13}$, it follows directly that $\chi_\rho^{''}(C_{11k}) \leq 9$, $\chi_\rho^{''}(C_{12k}) \leq 10$ and $\chi_\rho^{''}(C_{13k}) \leq 10$ for all $k \geq 1$.\\ For the lower bound of $\chi_\rho^{''}(C_n)$, where $n \geq 14$, it is in some cases useful to apply a method (similar as in~\cite{togni-2014}) based on determining the maximum possible number of vertices of $T(C_n)$ to which can be assigned a particular color in any optimal packing coloring of $T(C_n)$. More precisely, any optimal packing coloring of $T(C_n)$ assigns a color $i$, $i \geq 1$, to at most $\left\lfloor \frac{2n}{2i+1} \right\rfloor$ vertices. 
For instance, in the case of $C_{14}$, this method directly yields that $\chi_\rho^{''}(C_{14}) \geq 9$. However, note that this does not directly imply that $\chi_\rho^{''}(C_{14k}) \geq 9$ for any $k \geq 1$. We believe that this occurs because, in larger cycles, the sum of the maximum possible numbers of vertices of $T(C_n)$ to which a particular color can be assigned is not precise enough, as this approach does not take into account the fact that each vertex can receive only one color. Hence, making additional considerations regarding the number of vertices of $T(C_n)$ to which two or three selected colors can be assigned in any optimal packing coloring of $T(C_n)$ in some cases leads to further results.
For example, using such additional consideration for colors $1$ and $2$, we can prove that $\chi_\rho^{''}(C_{15}) \geq 9$, but again, this does not directly imply that $\chi_\rho^{''}(C_{15k}) \geq 9$ for any $k \geq 1$. Hence, further systematic research on the described problem is necessary.

Furthermore, the packing total chromatic number of higher powers of paths and cycles seems to be another natural direction for future investigation.

Although the packing total chromatic number $\chi_\rho^{''}(G)$ is unbounded within the class of graphs with maximum degree $\Delta(G) \geq 3$, it is natural to ask whether it becomes bounded when restricted to certain subclasses of subcubic graphs. Moreover, it would be interesting to know whether there exists an upper bound for $\chi_\rho^{''}(G)$, expressed in terms of the maximum degree of $G$, for graphs belonging to some family of subcubic graphs.

We believe that investigating specific subclasses with special structural properties could still yield exact values for packing total chromatic number of some graphs.
One such subclass is that of complete bipartite graphs, where the distance between any two elements of $V(G) \cup E(G)$ is at most 2. As a consequence, any optimal packing total coloring of such graphs uses each color greater than 1 at most once. Therefore, the problem of determining the packing total chromatic number for a complete bipartite graph reduces to finding the largest possible subset $A \subseteq V(G) \cup E(G)$ such that no two vertices in $A$ are adjacent, no two edges in $A$ are incident, and no vertex in $A$ is an endpoint of any edge in $A$. 

It is clear that a graph cannot have a smaller packing total chromatic number than any of its subgraphs; that is, $\chi_\rho^{''}(H) \leq \chi_\rho^{''}(G)$ for every subgraph $H$ of a graph $G$. Graphs $G$ satisfying the property that $\chi_\rho^{''}(H) < \chi_\rho^{''}(G)$ for every proper subgraph $H$ of $G$ are called packing total chromatic critical graphs. These graphs have not yet been studied. For example, it would be interesting to investigate which properties such graphs possess and to identify which graphs belong to this class.

Finally, as mentioned above, packing colorings are a special case of $S$-packing colorings. From this perspective, the concept of packing total coloring coincides with $S$-packing total coloring for the sequence $S = (1, 2, 3, \ldots)$. Therefore, a packing total coloring can be naturally generalized by introducing and studying an $S$-packing total coloring, where $S$ is an arbitrary non-decreasing sequence of positive integers.

%Additionally, we say that the color $x$ assigned to vertex $u$ by a coloring $c$ is a \emph{unique color}, if only vertex $u$ receives the color $x$ by $c$. Note that, if $c$ is a distance dominator packing coloring of type I or II, then the vertex $u$ which receives a unique color by $c$, distance dominates all vertices from its own color class (i.e. itself).

%%%%%%%%%%%%%%%%%%%%%%%%%%%%%%%%%%%%%%%%%%%%%%%%%%%%%%%%%%%%%%%%%%%%%%%%%%%%%%%%%%%%%%%%%%%%%%%%%%%%%%
\section{Acknowledgements}

J.F. acknowledges the financial support from the Slovenian Research Agency (N1-0431).
We would like to thank the reviewers for their careful reading of the manuscript and for their valuable comments and suggestions, which have helped improve the paper.

%
%D.M.\v S. acknowledges the financial support from the Slovenian Research Agency (J1-2452).

%%%%%%%%%%%%%%
%%%%%%%%%%%%%%%%%%%%%


\begin{thebibliography}{99}

\bibitem{balogh-2018}
  J.~Balogh, A.~Kostochka, X.~Liu,
  {\em Packing chromatic number of subcubic graphs},
  Discrete Math.\ {\bf 341} (2018) 474--483.

\bibitem{bf-2018b}
  B.~Bre\v sar, J.~Ferme,
  {\em An infinite family of subcubic graphs with unbounded packing chromatic number},
  Discrete Math.\ {\bf 341} (2018) 2337--2342.
 
\bibitem{survey}
 B.~Bre\v sar, J.~Ferme, S.~Klav\v zar, D.~F.~Rall,
 {\em A survey on packing colorings}, 
 Discuss.\ Math.\ Graph Theory\ {\bf 40} (2020) 923--970.

\bibitem{bkr-2007}
	B.~Bre\v sar, S.~Klav\v zar, D.F.~Rall,
 	{\em On the packing chromatic number of Cartesian products, hexagonal lattice, and trees},
    Discrete Appl.\ Math.\ {\bf 155} (2007) 2303--2311.


\bibitem{dliou}
K.~Dliou, 
{\em Independence, matching and packing coloring of the iterated Mycielskian of graphs},
Discrete Appl.\ Math.\ {\bf 361} (2025) 22--33.

\bibitem{gt-2019}
  N.~Gastineau and O.~Togni,
  {\em On $S$-packing edge-colorings of cubic graphs},
  Discrete Appl.\ Math.\ {\bf 259} (2019) 63--75.

\bibitem{goddard-2008}
  W.~Goddard, S.M.~Hedetniemi, S.T.~Hedetniemi, J.M.~Harris and D.F.~Rall,
  {\em Broadcast chromatic numbers of graphs},
  Ars Combin.\ {\bf 86} (2008) 33--49.

\bibitem{peterin}
D.~G\H{o}z\H{u}pek, I.~Peterin, 
{\em Grundy packing coloring of graphs},
Discrete Appl.\ Math.\ {\bf 371} (2025) 17--30.

\bibitem{hocquard}
H. Hocquard, D. Lajou, B. Lu\v zar, 
{\em Between proper and strong edge-colorings of subcubic graphs}, J.
Graph Theory {\bf 101} (4) (2022), 686--716.

\bibitem{junosza}
K.~Junosza-Szaniawski, H.~Grochowski, 
{\em Partial packing coloring and quasi-packing coloring of the triangular grid}, Discrete Math. \ {\bf 348} (2025).

\bibitem{kristiana}
A.~I.~Kristiana, S.~M.~Citra, Dafik, R.~Alfarisi, R.~Adawiyah, {\em On The Packing k-Coloring of Some Family Trees}, Stat. optim. inf. comput. {\bf 13} (3) (2024), 1291--1298.

\bibitem{liu}
X.~Liu, G.~Yu,
{\em On the $(1^2,2^4)$-packing edge-coloring of subcubic graphs}, arXiv:2402.18353 [math.CO] (28 Feb 2024).

\bibitem{liu-santana}  
X.~Liu, M.~Santana, T.~Short, 
{\em Every subcubic multigraph is $(1, 2^7)$-packing edge-colorable},
J. Graph Theory, {\bf 104} (2023), 851--885.

    
\bibitem{togni-2014}
  O.~Togni,
  {\em On packing colorings of distance graphs},
  Discrete Appl.\ Math.\ {\bf 167} (2014) 280--289.

\bibitem{yang}
  W.~Yang, B.~Wu, 
  {\em On $S$-packing edge-colorings of graphs with small edge weight},
  Appl. \ Math. \ Comput.\ {\bf{418}} (2022). 

\end{thebibliography}
\end{document}